\numberwithin{equation}{section}
\crefname{section}{Section}{Sections}
\crefname{figure}{Figure}{Figures}
\crefname{table}{Table}{Tables}
\crefname{equation}{}{}
\crefname{theorem}{Theorem}{Theorems}
\crefname{lemma}{Lemma}{Lemmas}
\crefname{remark}{Remark}{Remarks}
\crefname{problem}{Problem}{Subproblems}
\newtheorem{theorem}{Theorem}[section]
\newtheorem{remark}{Remark}[section]
\newtheorem{lemma}{Lemma}[section]
\theoremstyle{definition}
\crefname{ip}{Co-inversion Problem}{ips}
\newtheoremstyle{MyThmStyle}
{}
{}
{}
{}
{\bfseries}
{}
{ }
{\thmname{#1\thmnumber{ #2\hspace{0.5em}}}\thmnote{(#3)}}
\theoremstyle{MyThmStyle}
\crefname{subisp}{inverse source problem}{ips}
\crefname{subiop}{inverse obstacle problem}{ips}
\definecolor{bananamania}{rgb}{0.98, 0.91, 0.71}
\begin{document}
	
	\title{Stochastic positivity-preserving symplectic splitting methods for stochastic Lotka--Volterra predator-prey model}

	\author{
		Liying Zhang\thanks{School of Mathematical Science, China University of Mining and Technology, Beijing 100083, China, lyzhang@lsec.cc.ac.cn }, Xinyue Kang\thanks{School of Mathematical Science, China University of Mining and Technology, Beijing 100083, China, SQT2300702045@student.cumtb.edu.cn },
	Lihai Ji\thanks{
		Institute of Applied Physics and Computational Mathematics, Beijing 100094, China, jilihai@lsec.cc.ac.cn} \thanks{Shanghai Zhangjiang Institute of Mathematics, Shanghai 201203, China, jilihai@lsec.cc.ac.cn}}
	
	\date{}
	\maketitle
\begin{abstract}
	In this paper, we present two stochastic positive-preserving symplectic methods for the stochastic Lotka--Volterra predator-prey model driven by a multiplicative noise. To inherit the intrinsic characteristic of the original system, the stochastic Lie--Trotter splitting method and the stochastic Strang splitting method are introduced, which are proved to preserve the positivity of the numerical solution and possess the discrete stochastic symplectic conservation law as well. By deriving the uniform boundedness of the $p$-th moment of the numerical solution, we prove that the strong convergence orders of these two methods are both one in the $L^2(\Omega)$-norm. Finally, we validate the theoretical results through two and four dimensional numerical examples.
\end{abstract}
	
\textbf{Keywords:} stochastic Lotka--Volterra predator-prey model, positivity-preserving, stochastic symplecticity, splitting methods, strong convergence order.

\section{Introduction}
\label{Introduction}
The Lotka--Volterra (LV) predator-prey model, proposed by Lotka and Volterra, serves as a foundational framework in ecology for describing the dynamic interactions between prey and predator populations \cite{A1,V1}. It characterizes the temporal fluctuations in population sizes of both species \cite{J1}. However, in real-world ecosystems, species survival is frequently subjected to stochastic environmental perturbations \cite{G1,Q1}, which directly influence the survival and reproduction rates of prey and predators, thereby inducing population fluctuations \cite{L1}. To address this issue, mathematicians have integrated environmental white noise into ecological models to simulate stochastic factors’ impacts. For example,  \cite{A2} showed that incorporating randomness captures phenomena like abrupt population collapses or outbreaks — outcomes unanticipated by the deterministic LV framework.

The main objective of this paper is to numerically investigate the following $2d$-dimensional stochastic LV predator-prey model driven a multiplicative noise
\begin{equation}\label{eq:1}
	\begin{cases}
		d X(t)=\overline{X}(t)\left[\left(-\Gamma^{(2)} Y(t)+\eta^{(2)}\right) d t+\Sigma^{(2)} d W(t)\right] ,\\[2.5mm]
		d Y(t)=\overline{Y}(t)\left[\left(\Gamma^{(1)} X(t)-\eta^{(1)}\right) d t+\Sigma^{(1)} dW(t)\right],\\[2.5mm]
		X(0)=X_0,~Y(0)=Y_0,
	\end{cases}
\end{equation}
where $X(t) = \left(x_{1}(t), \cdots, x_{d}(t)\right)^{\top}$ and $Y(t) = \left(y_{1}(t), \cdots, y_{d}(t)\right)^{\top}$ denote the population densities of $d$ prey species and $d$ predator species at time $t$, respectively. $\overline{X} = \text{diag}\left\{x_{1}, \cdots, x_{d}\right\} \in \mathbb{R}^{d \times d}$ represents a diagonal matrix and diagonal elements are the components of the vector $X$. Similarly, $\overline{Y} = \text{diag}\{y_1, \cdots, y_d\}.$ For any vector or matrix $A$ , by using the notation $A>0$, we mean all the entries in $A$ are positive. The matrix $\Gamma^{(1)} = [\gamma_{ij}^{(1)}]_{d \times d} > 0$ represents the conversion rate from prey consumption to predator reproduction, indicating the proportion of new predator individuals generated for each unit of prey consumed in the ecosystem. The matrix $\Gamma^{(2)} = [\gamma_{ij}^{(2)}]_{d \times d} > 0$ denotes the mortality rate of prey owing to predation events. The vector $\eta^{(1)} = (\eta_{1}^{(1)}, \cdots, \eta_{d}^{(1)})^{\top} > 0$ represents the natural mortality rate of the $d$ predator species in the absence of food, whereas the vector $\eta^{(2)} = (\eta_{1}^{(2)}, \cdots, \eta_{d}^{(2)})^{\top} > 0$ represents the natural growth rate of the $d$ prey species in the absence of predators. Additionally, $W = (W_1, \cdots, W_m)^{\top}$ is an $m$-dimensional standard Wiener process, where $W_i,~i=1, \cdots, m$ are $m$ independent one-dimensional Wiener processes defined on a probability space $(\Omega, ~\mathcal{F},~\mathbb{P})$. The matrices $\Sigma^{(k)} = [\delta_{ij}^{(k)}]_{d \times m},~ k=1,2$ quantify the magnitude of the noise.  In recent years, significant progress has been made in theoretical studies of the stochastic LV model. For example, \cite{Hong} proved that (\ref{eq:1}) admits a unique positive solution, the $p$-th moment of the solution remains uniformly bounded and the model preserves the stochastic symplectic structure. Although the stochastic LV model admits a unique bounded solution, its multi-dimension and super-linearity of both the drift and diffusion coefficients make it challenging to derive exact solutions.

Various approximation methods have been introduced in the literature for this problem. To preserve the positivity of the solution, \cite{X2} proposed explicit truncated EM method; \cite{Y1} extended the truncated EM method to the stochastic LV model with super-linearly growing coefficients; \cite{Y2} presented a positivity preserving Lamperti transformed EM method; \cite{F1} designed an efficient and convenient method via operational matrices. However, these positivity-preserving methods fail to preserve the stochastic symplectic structure for the stochastic LV model \eqref{eq:1}. Recently, \cite{Hong} proposed a class of stochastic Runge--Kutta methods for the stochastic LV model, which is proved to preserve positivity of the numerical solution and possess the discrete stochastic symplectic conservation law as well. To the best of our knowledge, the application of the splitting methods to study positivity-preserving symplectic numerical methods for the stochastic LV model remains unexplored in existing literature. 

This paper aims to construct positivity-preserving symplectic numerical methods for \eqref{eq:1} by utilizing the stochastic splitting technique. When the matrices $\Gamma^{(k)}, ~k=1,2$ are diagonal, we reformulate \eqref{eq:1} as a stochastic Hamiltonian system and apply the stochastic splitting technique to decouple it into two exactly solvable subsystems. By applying the stochastic Lie--Trotter and the stochastic Strang composition operations to the exact solutions of these subsystems, we derive the stochastic Lie--Trotter and the stochastic Strang splitting methods for \eqref{eq:1}, respectively. The resulting methods maintain an explicit structure, enabling straightforward verification of positivity preservation of the numerical solution. Furthermore, the stochastic splitting methods possess the stochastic symplecticity. Based on the fundamental theorem on the mean-square order of convergence \cite{M} and utilizing the uniform boundedness of the numerical solution, we can prove that these two stochastic positivity-preserving symplectic splitting methods converge with global order one in the $L^2(\Omega)$-norm. Finally, we validate the theoretical results through two and four dimensional numerical examples.

The structure of this paper is as follows: in section \ref{solution},  some preliminaries are collected and some properties of stochastic LV model, including regularity and stochastic symplecticity, are also considered. In section \ref{sec:positivity}, two stochastic positivity-preserving symplectic splitting methods are proposed and our main results are stated: in section \ref{sec:symplectic} we give some conditions to guarantee that two given stochastic splitting methods are symplectic; in section \ref{sec:bound} we prove the unique existence and regularity of the numerical solution of the stochastic splitting methods; section \ref{sec:convergence order} is devoted to the proof of the convergence order of the stochastic positivity-preserving symplectic splitting methods. Finally, two numerical experiments are performed in section \ref{sec:numerical} to validate the effectiveness of the proposed methods.

\section{Preliminaries}\label{solution}
In this section, we present some preliminaries for the analysis of the stochastic LV model \eqref{eq:1}. And some properties of the stochastic LV model, including regularity and stochastic symplecticity are also considered. We refer to \cite[Section 2]{Hong} and references therein for more details.

Throughout this paper, the constants C may be different from line to line. When it is necessary to indicate the dependence on some parameters, we will use the notation $C(\cdot)$. For instance, $C(X_{0}, Y_{0},T,p)$ is a constant depending on $X_{0},~Y_{0},~T$ and $p$. For any positive integer $n \in \mathbb{N}_{+} $ and vectors  $U=\left(u_{1}, \cdots, u_{n}\right)^{\top} \in \mathbb{R}^{n},~V=\left(v_{1}, \cdots, v_{n}\right)^{\top} \in \mathbb{R}^{n}$, we define $U {\ast} V:=\left(u_{1} v_{1}, \cdots, u_{n} v_{n}\right)^{\top}.$ 

The following lemma focuses on the global well-posedness and positivity of the solution to \eqref{eq:1}.
\begin{lemma}\label{p1}
	For any deterministic initial datum $(X_0^\top,~Y_0^\top)^{\top}\in\mathbb{R}_+^{2d}$, the stochastic LV model \eqref{eq:1} has a unique solution $(X^\top(t),~Y^\top(t))^{\top}$. Furthermore, for all $0\leq t\leq \infty$, it holds $(X^\top(t),~Y^\top(t))^{\top}\in\mathbb{R} _+ ^{2d}$.
\end{lemma}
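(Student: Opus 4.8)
The plan is to follow the classical Khasminskii-type argument for SDEs with locally Lipschitz (but not globally Lipschitz) coefficients, combined with an explicit Lyapunov function that both confines the solution to the positive orthant and controls moments so that explosion is ruled out. First I would observe that on the open positive orthant $\mathbb{R}_+^{2d}$ the drift and diffusion coefficients of \eqref{eq:1} are locally Lipschitz and smooth, so by standard existence--uniqueness theory there is a unique maximal local solution $(X^\top(t),Y^\top(t))^\top$ defined up to an explosion/exit time $\tau_e$, namely the first time the process leaves $\mathbb{R}_+^{2d}$ or blows up. The whole statement then reduces to showing $\tau_e=\infty$ a.s.

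The key step is a change of variables that turns the multiplicative structure into something tractable: set $u_i=\ln x_i$ and $v_j=\ln y_j$ (equivalently, work with $\sum_i(x_i-1-\ln x_i)+\sum_j(y_j-1-\ln y_j)$ as a Lyapunov function, which is nonnegative and blows up both as any coordinate $\to 0^+$ and as any coordinate $\to\infty$). Applying It\^o's formula to this functional along the solution, the superlinear cross terms $\Gamma^{(2)}Y$ and $\Gamma^{(1)}X$ coming from the drift, together with the quadratic contributions $\tfrac12|\Sigma^{(k)}|^2$ from the It\^o correction, must be shown to cancel or be dominated; this is exactly the Volterra-type algebraic cancellation that makes the deterministic LV model conservative, and it is the reason the positive orthant is invariant. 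I would carry this out for the stopped process at $\tau_R=\inf\{t:\ $some coordinate $\le 1/R$ or $\ge R\}$, take expectations, and obtain a bound $\mathbb{E}\,V(X(t\wedge\tau_R),Y(t\wedge\tau_R))\le V(X_0,Y_0)+Ct$ uniform in $R$. A Chebyshev/Markov argument on $\{\tau_R\le t\}$ then forces $\mathbb{P}(\tau_R\le t)\to 0$ as $R\to\infty$, hence $\tau_e=\lim_R\tau_R=\infty$ a.s., which simultaneously gives global existence and the invariance $(X^\top(t),Y^\top(t))^\top\in\mathbb{R}_+^{2d}$ for all $t\ge 0$.

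The main obstacle I anticipate is the It\^o-correction bookkeeping: unlike the deterministic LV case, the logarithmic transform produces extra $-\tfrac12\sum_k (\Sigma^{(k)}\,\cdot\,)^2$ terms, and one must check that these (which are bounded, being independent of the size of $X,Y$) do not destroy the linear-in-$t$ Lyapunov estimate, i.e. that the genuinely superlinear drift terms still cancel in pairs $\gamma^{(2)}_{ij}y_j$ against $-\gamma^{(1)}_{ji}x_i$ when summed with the correct weights. I expect this to go through because the stochastic perturbation is multiplicative and therefore preserves the bilinear skew structure, so after the cancellation only affine and bounded terms survive; but making the weighting explicit (and, if $\Gamma^{(k)}$ are not assumed symmetric, checking what substitute for exact cancellation is available) is where the real work lies. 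Since the statement is attributed to \cite{Hong}, I would also note that one may simply invoke that reference; the sketch above is the self-contained route.
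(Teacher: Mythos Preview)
The paper does not actually prove Lemma~\ref{p1}: it is stated without proof and attributed to \cite[Section~2]{Hong}, so there is no ``paper's own proof'' to compare against beyond the citation. Your Khasminskii--Lyapunov sketch is the standard route and is almost certainly what \cite{Hong} carries out; in particular, your closing remark that one may simply invoke the reference is exactly what the present paper does.

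One technical comment on your sketch itself: the unweighted Lyapunov function $\sum_i(x_i-1-\ln x_i)+\sum_j(y_j-1-\ln y_j)$ will in general \emph{not} make the bilinear $x_iy_j$ terms cancel when $\Gamma^{(1)}$ and $\Gamma^{(2)}$ are arbitrary positive matrices. What one actually needs is a weighted version $V=\sum_i p_i(x_i-1-\ln x_i)+\sum_j q_j(y_j-1-\ln y_j)$ with $p_i,q_j>0$ chosen so that $q_j\gamma^{(1)}_{ji}\le p_i\gamma^{(2)}_{ij}$ for all $i,j$; then the cross terms in $\mathcal{L}V$ are nonpositive and the remaining terms are at most affine in $(X,Y)$, hence dominated by $C(1+V)$. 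The paper's Lemma~\ref{p2} in fact alludes to exactly such weights $p_i,q_i$ taken from \cite[Eqs.~2.2--2.4]{Hong}. You already flag this weighting issue as ``where the real work lies,'' so your plan is sound; just be aware that exact cancellation is neither needed nor generically available, and a one-sided inequality suffices.
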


By utilizing the positivity of the solution of the stochastic LV model \eqref{eq:1}, the uniform boundedness of the solution can be obtained. The proof is analogous to that of \cite[Proposition 2.1]{Hong} and thus is omitted.
\begin{lemma}\label{p2}
	For any $p\geq1$ and a deterministic initial value $(X_{0}^\top,~Y_{0}^\top)^{\top}\in\mathbb{R}_{+}^{2d}$, the solution of \eqref{eq:1} is uniformly bounded and satisfies $$\sup_{t\in[0,T]}\mathbb{E}\Big[\sum_{i=1}^{d}\left(p_{i}x_{i}(t)+q_{i}y_{i}(t)\right)\Big]^{p}\leq C,$$ 
	where the positive constant $C=C(X_0, Y_0,\Gamma^{(1)},\Gamma^{(2)},\eta^{(1)},\eta^{(2)},\Sigma^{(1)},\Sigma^{(2)})$ and $p_{i}$ and $q_{i}$ are positive constants given by \cite[Eqs. 2.2-2.4]{Hong}.
\end{lemma}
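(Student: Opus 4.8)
Proof proposal for Lemma \ref{p2} (the uniform $p$-th moment boundedness of the solution).

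The plan is to find a linear combination $Z(t) := \sum_{i=1}^d \bigl(p_i x_i(t) + q_i y_i(t)\bigr)$ of the components of $X$ and $Y$ whose drift is dissipative, so that $Z$ satisfies a scalar-type estimate amenable to Itô's formula, and then bootstrap to the $p$-th moment. The coefficients $p_i, q_i > 0$ must be chosen so that the ``dangerous'' bilinear interaction terms cancel: in the drift of $x_i$ the term $-x_i (\Gamma^{(2)} Y)_i = -\sum_j \gamma_{ij}^{(2)} x_i y_j$ appears, while in the drift of $y_j$ the term $+y_j (\Gamma^{(1)} X)_j = \sum_i \gamma_{ij}^{(1)} y_j x_i$ appears; summing $\sum_i p_i\,dx_i + \sum_j q_j\,dy_j$, the cross terms are $-\sum_{i,j} p_i \gamma_{ij}^{(2)} x_i y_j + \sum_{i,j} q_j \gamma_{ij}^{(1)} x_i y_j$, and one wants to pick $p_i, q_j$ making this expression $\le 0$ (ideally $=0$, which is exactly the role of \cite[Eqs.~2.2--2.4]{Hong}). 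After this cancellation the drift of $Z$ is bounded above by an affine function of $Z$, i.e.\ $\sum_i p_i \eta_i^{(2)} x_i - \sum_i q_i \eta_i^{(1)} y_i \le C(1+Z)$ — in fact it is $\le C - c Z$ for suitable $c>0$ if the $\eta$'s cooperate, but affine-upper-bound suffices.

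First I would invoke Lemma \ref{p1} to guarantee $X(t), Y(t) \in \mathbb{R}_+^d$ for all $t \ge 0$, so that $Z(t) > 0$ almost surely and all the manipulations below (e.g.\ applying Itô to $Z^p$, or to $e^{\lambda t} Z^p$) are legitimate and signs of terms like $-q_i \eta_i^{(1)} y_i$ are controlled. Next I would write the SDE for $Z(t)$ by summing the componentwise equations in \eqref{eq:1}: the drift is the affine-bounded expression just discussed, and the diffusion is $\sum_{i,k}\bigl(p_i \delta_{ik}^{(2)} x_i + q_i \delta_{ik}^{(1)} y_i\bigr)\,dW_k$, whose coefficient is bounded in absolute value by $C\,Z(t)$ (a finite multiple of $Z$, since each $x_i, y_i \le (\min_j\{p_j,q_j\})^{-1} Z$ and the noise matrices have finitely many bounded entries). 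Then I would apply Itô's formula to $Z(t)^p$ (for $p \ge 2$ first; the case $1 \le p < 2$ follows by Jensen / Hölder from the $p=2$ bound): the drift contribution from the first-order term is $\le p Z^{p-1}\cdot C(1+Z) \le C(1 + Z^p)$, and the Itô correction term $\tfrac12 p(p-1) Z^{p-2}\cdot(\text{diffusion coeff})^2 \le \tfrac12 p(p-1) Z^{p-2}\cdot C Z^2 = C Z^p$; taking expectations kills the martingale part and yields $\tfrac{d}{dt}\mathbb{E}[Z(t)^p] \le C\bigl(1 + \mathbb{E}[Z(t)^p]\bigr)$, whence Grönwall's inequality gives $\mathbb{E}[Z(t)^p] \le (1 + \mathbb{E}[Z(0)^p]) e^{CT} - 1 =: C(X_0,Y_0,T,p,\ldots)$ uniformly on $[0,T]$. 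To be fully rigorous I would first run this argument with a stopping time $\tau_n = \inf\{t : Z(t) \ge n\}$ to justify discarding the stochastic integral, obtaining a bound uniform in $n$, and then let $n \to \infty$ via Fatou.

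The main obstacle — and the only genuinely model-specific step — is the very first one: verifying that the weights $p_i, q_i$ from \cite[Eqs.~2.2--2.4]{Hong} actually annihilate (or render non-positive) the quadratic cross terms coming from $\Gamma^{(1)}, \Gamma^{(2)}$. This is a linear-algebra condition on the matrices $\Gamma^{(k)}$ (essentially that one can solve $p_i \gamma_{ij}^{(2)} = q_j \gamma_{ij}^{(1)}$, or the corresponding inequality, with strictly positive $p_i, q_j$); once that cancellation is in hand, everything downstream is the standard Itô–Grönwall moment machinery and is entirely routine. Since the present paper only treats the case where $\Gamma^{(1)}, \Gamma^{(2)}$ are \emph{diagonal}, this condition simplifies drastically — one just sets $p_i \gamma_{ii}^{(2)} = q_i \gamma_{ii}^{(1)}$ componentwise — which is why the authors are content to cite \cite[Proposition 2.1]{Hong} and omit the proof.
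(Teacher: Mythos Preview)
Your proposal is correct and follows the standard Lyapunov-function route that \cite[Proposition~2.1]{Hong} uses; the present paper does not give its own proof at all but simply cites that result, so there is nothing further to compare. One small index slip: in the cancellation condition you want $p_i\gamma_{ij}^{(2)} = q_j\gamma_{ji}^{(1)}$ (the second index on $\Gamma^{(1)}$ should be transposed), but this does not affect the argument, and in the diagonal case treated here it reduces exactly to the componentwise relation you state at the end.
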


Specifically, when $\Gamma^{(k)}$ with $ k=1,2$ are diagonal matrices, \cite{Hong} showed that the stochastic LV model \eqref{eq:1} can be reformulate as a non-canonical stochastic Hamiltonian system in the Stratonovich sense, that is
\begin{equation}\label{eq:2}
	d Z(t)=K^{-1}(Z(t)) \nabla_{Z} H_{1}(Z(t)) d t+K^{-1}(Z(t)) \nabla_{Z} H_{2}(Z(t)) \circ d W(t),
\end{equation}
where $Z=(X^\top,~Y^\top)^\top$ with  $Z_{0}=\left(X_{0}^\top,~Y_{0}^\top\right)^\top\in   \mathbb{R}_{+}^{2 d}$. $K$ is an invertible skew-symmetric matrix which satisfies
\begin{equation}
	\begin{aligned}
		K(Z)= & {\left[\begin{array}{cc}
				0 & -K^{*}(Z) \\[2mm] 
				K^{*}(Z) & 0
			\end{array}\right], \quad K^{*}(Z)=\operatorname{diag}\left\{\frac{1}{x_{1} y_{1}}, \cdots,\frac{1}{x_{d} y_{d}}\right\}},\\
	\end{aligned}
\end{equation}
and the Hamiltonians $H_{1}(Z),~H_{2}(Z)$ are given by
\begin{equation}\label{hamiltos}
	\begin{split}
		H_{1}(Z)= & \sum_{i=1}^{d}\left[-\gamma_{i i}^{(1)} x_{i}+\left(\eta_{i}^{(1)}+\frac{1}{2} \sum_{j=1}^{d}\left(\sigma_{i j}^{(1)}\right)^{2} \ln x_{i}\right)\right] \\
		& +\sum_{i=1}^{d}\left[-\gamma_{i i}^{(2)} y_{i}+\left(\eta_{i}^{(2)}-\frac{1}{2} \sum_{j=1}^{d}\left(\sigma_{i j}^{(2)}\right)^{2} \ln y_{i}\right)\right], \\
		H_{2}(Z)= & {\left[\sum_{i=1}^{d}\left(-\sigma_{i 1}^{(1)} \ln x_{i}+\sigma_{i 1}^{(2)} \ln y_{i}\right), \cdots, \sum_{i=1}^{d}\left(-\sigma_{i m}^{(1)} \ln x_{i}+\sigma_{i m}^{(2)} \ln y_{i}\right)\right]_{1 \times m} . }
	\end{split}
\end{equation}

Thereby, the stochastic LV model \eqref{eq:1} possesses the following stochastic symplectic geometric structure.
\begin{lemma}\label{p3}
	In the case of diagonal matrices $\Gamma^{(k)} $ with $k=1,2$, the phase flow of \eqref{eq:1} preserves the stochastic symplectic conservation law
	$$\mathrm{d} Z(t) \wedge K(Z(t)) \mathrm{d} Z(t)=\mathrm{d} Z_{0} \wedge K\left(Z_{0}\right) \mathrm{d} Z_{0}, \quad a.s.$$
	for all $t\geq0$. Equivalently, the phase flow $\varphi_{t}: Z_{0} \mapsto Z(t) $ satisfies
	\begin{align}\label{eq:15}
		\left[\frac{\partial\varphi _t(Z_0)}{\partial Z_0}\right]^\top K(\varphi _t(Z_0))\left[\frac{\partial\varphi _t(Z_0)}{\partial Z_0}\right]=K(Z_0).
	\end{align}
\end{lemma}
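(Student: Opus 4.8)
The plan is to pass to the variational (Jacobian) process $\Psi(t):=\partial\varphi_t(Z_0)/\partial Z_0$ and to show directly that the $\mathbb{R}^{2d\times 2d}$-valued process $M(t):=\Psi(t)^\top K\big(\varphi_t(Z_0)\big)\,\Psi(t)$ is constant in $t$ almost surely. Since $\Psi(0)=I$, this gives $M(t)=K(Z_0)$ for all $t$, which is exactly \eqref{eq:15}; the wedge-product identity $\mathrm{d}Z(t)\wedge K(Z(t))\,\mathrm{d}Z(t)=\mathrm{d}Z_0\wedge K(Z_0)\,\mathrm{d}Z_0$ is its coordinate-free restatement. A fact used repeatedly is that, by \cref{p1}, the trajectory stays in $\mathbb{R}_+^{2d}$, so $K$, $H_1$, $H_2$ and all vector fields below are smooth along it; in particular $\varphi_t$ is a smooth stochastic flow, $\Psi(t)$ is well defined, and the Stratonovich calculus (which obeys the ordinary chain and Leibniz rules) applies without Itô corrections.

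First I would differentiate \eqref{eq:2} in $Z_0$ to obtain the linear variational equation
\begin{equation*}
\mathrm{d}\Psi(t)=Df_0\big(Z(t)\big)\,\Psi(t)\,\mathrm{d}t+\sum_{r=1}^{m}Df_r\big(Z(t)\big)\,\Psi(t)\circ\mathrm{d}W_r(t),
\end{equation*}
where $f_0:=K^{-1}\nabla_Z H_1$, $f_r:=K^{-1}\nabla_Z H_2^{(r)}$ with $H_2^{(r)}$ the $r$-th component of $H_2$, and $D$ is the $Z$-Jacobian. Since $\mathrm{d}\big[K(Z(t))\big]=(\nabla_{f_0}K)\,\mathrm{d}t+\sum_r(\nabla_{f_r}K)\circ\mathrm{d}W_r$ with $\nabla_g K:=\sum_k g_k\,\partial_{Z_k}K$, applying the Stratonovich Leibniz rule to $M=\Psi^\top(K\Psi)$ gives
\begin{equation*}
\mathrm{d}M=\Psi^\top\!\Big[Df_0^\top K+\nabla_{f_0}K+K\,Df_0\Big]\Psi\,\mathrm{d}t+\sum_{r=1}^m\Psi^\top\!\Big[Df_r^\top K+\nabla_{f_r}K+K\,Df_r\Big]\Psi\circ\mathrm{d}W_r ,
\end{equation*}
so $M$ is constant provided the pointwise matrix identity $Df(Z)^\top K(Z)+(\nabla_f K)(Z)+K(Z)\,Df(Z)=0$ on $\mathbb{R}_+^{2d}$ — call it $(\star)$ — holds for each $f\in\{f_0,f_1,\dots,f_m\}$.

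The core of the proof is verifying $(\star)$ for vector fields of the special form $f=K^{-1}\nabla H$. I would argue via differential forms: $(\star)$ is precisely the coordinate expression of $\mathcal{L}_f\kappa=0$, where $\kappa$ is the two-form determined by $K$, namely (up to an irrelevant overall sign) $\kappa=\sum_{i=1}^d\frac{1}{x_iy_i}\,\mathrm{d}x_i\wedge\mathrm{d}y_i$. Because $\frac{1}{x_iy_i}\mathrm{d}x_i\wedge\mathrm{d}y_i=\mathrm{d}(\ln x_i)\wedge\mathrm{d}(\ln y_i)$, we have $\kappa=\mathrm{d}\big(\sum_i\ln x_i\,\mathrm{d}\ln y_i\big)$, so $\kappa$ is exact, hence $\mathrm{d}\kappa=0$; together with the skew-symmetry and invertibility of $K$ this makes $\kappa$ a symplectic form. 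For $f=K^{-1}\nabla H$ the contraction $\iota_f\kappa$ equals $\mathrm{d}H$ up to sign, so Cartan's formula yields $\mathcal{L}_f\kappa=\mathrm{d}\,\iota_f\kappa+\iota_f\,\mathrm{d}\kappa=\mathrm{d}(\mathrm{d}H)+0=0$, that is $(\star)$. (Equivalently, one may expand $(\star)$ componentwise and cancel terms using $K=-K^\top$ and the closedness relation $\partial_iK_{jk}+\partial_jK_{ki}+\partial_kK_{ij}=0$, which is immediate from the sparse structure of $K$.) Taking $H=H_1$ and $H=H_2^{(r)}$ then yields $(\star)$ for all of $f_0,f_1,\dots,f_m$.

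Combining the steps, $\mathrm{d}M(t)=0$, hence $\Psi(t)^\top K(\varphi_t(Z_0))\,\Psi(t)=\Psi(0)^\top K(Z_0)\,\Psi(0)=K(Z_0)$ a.s.\ for all $t\ge 0$, which is \eqref{eq:15} and, equivalently, the claimed conservation law. The main obstacle I anticipate is organizational rather than analytic: one must check that the drift part and each diffusion part of $\mathrm{d}M$ vanish separately, and that $\mathrm{d}[K(Z(t))]$ is expanded with the Stratonovich rule — this is what makes $(\star)$, rather than a corrected identity, the right condition — but no delicate estimate arises, since positivity of the flow keeps everything in the smooth interior. A shorter, essentially equivalent route is to introduce logarithmic coordinates $(u_i,v_i)=(\ln x_i,\ln y_i)$, in which \eqref{eq:1} becomes a \emph{canonical} Stratonovich Hamiltonian system; invoking the classical symplecticity of such systems and changing back (the Jacobian of $(x,y)\mapsto(u,v)$ turns $\sum\mathrm{d}u_i\wedge\mathrm{d}v_i$ into $\kappa$) gives the result.
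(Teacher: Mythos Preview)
The paper does not actually prove \cref{p3}; it is listed among the preliminaries and attributed to \cite{Hong}. So there is no in-paper proof to compare against directly.

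Your argument is correct and is the standard route to stochastic symplecticity: pass to the variational process, apply the Stratonovich Leibniz rule to $M=\Psi^\top K\Psi$, and reduce to the pointwise identity $(\star)$, i.e.\ $\mathcal{L}_f\kappa=0$, for each Hamiltonian vector field $f=K^{-1}\nabla H$. The verification via Cartan's formula, using that $\kappa=\sum_i\mathrm{d}(\ln x_i)\wedge\mathrm{d}(\ln y_i)$ is exact (hence closed) and that $\iota_f\kappa=\pm\,\mathrm{d}H$, is clean and complete. Your remark that the Stratonovich formulation is essential---so that no It\^o correction appears in $(\star)$---is exactly the right point to flag.

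For context, when the paper \emph{does} prove a symplecticity statement (\cref{T2}, for the decoupled subsystems), it proceeds by brute-force matrix computation: the explicit closed-form flows $\varphi_t^1,\varphi_t^2$ are written down, their Jacobians are multiplied out against $K$, and the diagonal structure of $\Gamma^{(k)}$, $K^*$, $\overline{Y}$ forces the cancellations. That style is elementary and avoids any differential-geometric language, but it hinges on having explicit solutions and would not transfer to the full coupled flow~\eqref{eq:2}. Your variational/Cartan argument (or, equivalently, the logarithmic change of variables you mention at the end) is the natural approach there, and is in fact what one would expect the cited reference to contain.
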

\section{Stochastic positivity-preserving symplectic splitting methods}\label{sec:positivity}
In this section, we will investigate the stochastic splitting methods for the stochastic LV model \eqref{eq:1} which are derived by the Lie--Trotter and the Strang splitting techniques. Moreover, we show the positivity, the symplecticity and the convergence order of the stochastic splitting methods.

Notice that each Hamiltonians $H_1(Z)$ and $H_2(Z)$ can be decomposed into two components
\begin{equation}\label{eq:42}
	H_1(Z) = H_{1,X}(X) + H_{1,Y}(Y),{\quad} H_2(Z) = H_{2,X}(X) + H_{2,Y}(Y),
\end{equation} 
respectively, where $H_{i,X}$ depends solely on $X(t)$ and $H_{i,Y}$ on $Y(t)$ for $i=1,2$. It follows from \eqref{hamiltos} that
\begin{align}{\notag} 
	H_{1,X}(X)= & \sum_{i=1}^{d}\left[-\gamma_{i i}^{(1)} x_{i}+\left(\eta_{i}^{(1)}+\frac{1}{2} \sum_{j=1}^{d}\left(\sigma_{i j}^{(1)}\right)^{2} \ln x_{i}\right)\right] ,\\{\notag} 
	H_{1,Y}(Y)= & \sum_{i=1}^{d}\left[-\gamma_{i i}^{(2)} y_{i}+\left(\eta_{i}^{(2)}-\frac{1}{2} \sum_{j=1}^{d}\left(\sigma_{i j}^{(2)}\right)^{2} \ln y_{i}\right)\right], \\{\notag}
	H_{2,X}(X)= & {\left[\sum_{i=1}^{d}\left(-\sigma_{i 1}^{(1)} \ln x_{i}\right), \cdots, \sum_{i=1}^{d}\left(-\sigma_{i m}^{(1)} \ln x_{i}\right)\right]_{1 \times m} ,}\\{\notag} 
	H_{2,Y}(Y)= &{\left[\sum_{i=1}^{d}\left(\sigma_{i 1}^{(2)} \ln y_{i}\right), \cdots, \sum_{i=1}^{d}\left(\sigma_{i m}^{(2)} \ln y_{i}\right)\right]_{1 \times m} ,{\notag}  }
\end{align}
which by the stochastic Hamiltonian system \eqref{eq:2}, implies that
\begin{equation}\label{eq:3}
	d Z(t)=K^{-1}(Z(t)) \nabla_{Z} H_{1,X}(X(t)) d t+K^{-1}(Z(t)) \nabla_{Z} H_{2,X}(X(t)) \circ d W(t),
\end{equation}
and
\begin{equation}\label{eq:4}
	d Z(t)=K^{-1}(Z(t)) \nabla_{Z} H_{1,Y}(Y(t)) d t+K^{-1}(Z(t)) \nabla_{Z} H_{2,Y}(Y(t)) \circ d W(t).
\end{equation}

By the It$\hat{\text{o}}$ formula, we can explicitly derive the exact solutions for the above two subsystems. For subsystem \eqref{eq:3}, the solution is given by
\begin{equation}\label{eq1111}
	\begin{array}{l}
		{\varphi} _{t}^{1}:\left\{\begin{array}{l}
			X(t)=X_{0}, \\[2mm]
			Y(t)=Y_{0} {\ast}  \exp  \left\{\left(\Gamma ^{(1)} X_{0}-\eta^{(1)}-\frac{1}{2} \Lambda^{(1)}\right) t+\Sigma^{(1)} W(t)\right\}.
		\end{array}\right. 
	\end{array}
\end{equation}
And the solution of subsystem \eqref{eq:4} satisfies
\begin{equation}\label{eq2222}
	\begin{array}{l}
		{\varphi} _{t}^{2}:\left\{\begin{array}{l}
			X(t)=X_{0} {\ast}  \exp  \left\{\left(-\Gamma ^{(2)} Y_{0}+\eta^{(2)}-\frac{1}{2} \Lambda^{(2)}\right) t+\Sigma^{(2)} W(t)\right\} ,\\ [2mm]
			Y(t)=Y_{0},
		\end{array}\right. 
	\end{array}
\end{equation}
where $\Lambda^{(k)}:=\left(\sum_{j=1}^{m}\left(\sigma_{1 j}^{(k)}\right)^{2}, \cdots, \sum_{j=1}^{m}\left(\sigma_{d j}^{(k)}\right)^{2}\right)^{\top},$ $k=1,2$. 

For the time interval $[0,T]$, we define the step size $h=T/N$, which partitions the domain uniformly into $N+1$ temporal nodes $0 = t_0 < t_1 < \cdots < t_N = T$. The numerical solution $Z_{n}$ approximates the exact solution $Z(t_{n})$  at each time node $t_n$ and evolves through a one-step numerical method over $[t_n, t_{n+1}]$ with initial datum $Z_n$. Within this framework, the dynamics governed by subsystem (\ref{eq:3}) are aligned with the $x$-axis, while those of subsystem (\ref{eq:4}) are aligned with the $y$-axis. Geometrically, this stochastic splitting method decomposes the phase flow into two orthogonal components along these coordinate axes. Building upon this decomposition, we implement the following two stochastic splitting methods.

\textbf{I. Stochastic Lie--Trotter splitting method}

When ${\varphi} _{t}^{1}$ and ${\varphi} _{t}^{2}$ represent the exact solutions of the two subsystems of the stochastic Hamiltonian system respectively, the stochastic Lie--Trotter splitting method is defined as
\begin{equation}\label{eq:16}
	Z_{n+1}={\varphi} _{h}^{1} \circ{\varphi} _{h}^{2} (Z_n).
\end{equation}
Substituting the exact solutions \eqref{eq1111} and \eqref{eq2222} into the above Eq. \eqref{eq:16}, we finally obtain the following method starting from $(X_0^\top,~Y_0^\top)^{\top}$ for the stochastic LV model \eqref{eq:2}
\begin{equation}\label{eq:5}
	\begin{split}
		& X_{n+1}=X_{n} {\ast}  \exp  \left\{\left(-\Gamma ^{(2)} Y_{n}+\eta^{(2)}-\frac{1}{2} \Lambda^{(2)}\right) h+\Sigma^{(2)}(W(t_{n+1})-W(t_n))\right\} ,\\[2mm]
		& Y_{n+1}=Y_{n} {\ast}  \exp  \left\{\left(\Gamma ^{(1)} X_{n+1}-\eta^{(1)}-\frac{1}{2} \Lambda^{(1)}\right) h+\Sigma^{(1)} \left(W(t_{n+1})-W(t_n)\right)\right\}.
	\end{split}
\end{equation}

\textbf{II. Stochastic Strang splitting method}

It follows from \eqref{eq1111}--\eqref{eq2222} that the stochastic Strang splitting method is defined as
\begin{equation}\label{eq:17}
	Z_{n+1}=	{\varphi} _{\frac{h}{2} }^{2} \circ{\phi} _{h}^{1} 	\circ{\varphi} _{\frac{h}{2} }^{2}(Z_n).
\end{equation}
Substituting the exact solutions \eqref{eq1111} and \eqref{eq2222} into the above Eq. \eqref{eq:17}, we finally obtain the following method starting from $(X_0^\top,~Y_0^\top)^{\top}$ for the stochastic LV model \eqref{eq:2}
\begin{equation}\label{eq:6}
	\begin{split}
		\widetilde{X}_n &=X_{n} {\ast}  \exp  \left\{\left(-\Gamma ^{(2)} Y_{n}+\eta^{(2)}-\frac{1}{2} \Lambda^{(2)}\right) {\frac{h}{2} }+\Sigma^{(2)}\left(W\left(t_{n}+{\frac{h}{2}}\right)-W(t_n)\right)\right\}, \\[2mm]
		Y_{n+1}&=Y_{n} {\ast}  \exp  \left\{\left(\Gamma ^{(1)} \widetilde{X}_{n}-\eta^{(1)}-\frac{1}{2} \Lambda^{(1)}\right) h+\Sigma^{(1)} \left(W(t_{n+1})-W(t_n)\right)\right\},\\[2mm]
		X_{n+1}&=\widetilde{X}_n {\ast}  \exp  \left\{\left(-\Gamma ^{(2)} Y_{n+1}+\eta^{(2)}-\frac{1}{2} \Lambda^{(2)}\right) \frac{h}{2}+\Sigma^{(2)}\left(W(t_{n+1})-W\left(t_n+{\frac{h}{2} }\right)\right)\right\} .
	\end{split}
\end{equation}

By decomposing the $2d$-dimensional stochastic LV model \eqref{eq:1} into computationally tractable low-dimensional subsystems (either two or three subsystems of dimension $d$), it can be seen that \eqref{eq:5} and \eqref{eq:6} can reduce the computational cost and improve the efficiency. Moreover, we note that these two stochastic splitting methods can preserve the positivity of the numerical solution $(X_n^\top,~Y_n^\top)^\top$, which stated in the following theorem.
\begin{theorem}\label{T1}
	For any deterministic initial value $(X_0^\top,~ Y_0^\top)^\top \in \mathbb{R}_{+}^{2d}$, the numerical solutions $(X_n^\top,~ Y_n^\top)^\top$ of  \eqref{eq:5} and \eqref{eq:6} both satisfy $(X_n^\top, ~Y_n^\top)^\top \in \mathbb{R}_{+}^{2d}$ for all $n \in \{1, \cdots, N\}$.
\end{theorem}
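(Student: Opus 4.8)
The plan is to argue directly from the closed-form update formulas \eqref{eq:5} and \eqref{eq:6}, exploiting the fact that the exponential of any real number is strictly positive together with the fact that the Hadamard product $\ast$ of two strictly positive vectors is strictly positive. Concretely, the key structural observation is that each component of $X_{n+1}$ (resp. $Y_{n+1}$, $\widetilde X_n$) is produced from a component of $X_n$ (resp. $Y_n$) by multiplying it by $\exp\{\,\cdot\,\}$, where the bracketed argument is an almost-surely finite real number; hence the sign is preserved componentwise.

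First I would set up the induction on $n$. The base case is the hypothesis $(X_0^\top, Y_0^\top)^\top \in \mathbb{R}_+^{2d}$. For the inductive step, assume $(X_n^\top, Y_n^\top)^\top \in \mathbb{R}_+^{2d}$. For the Lie--Trotter method \eqref{eq:5}: since $Y_n > 0$ and $X_n > 0$, all the quantities $-\Gamma^{(2)} Y_n + \eta^{(2)} - \tfrac12 \Lambda^{(2)}$ and $\Sigma^{(2)}(W(t_{n+1}) - W(t_n))$ are well-defined finite vectors (no logarithms of nonpositive numbers appear, because the log terms in the Hamiltonians have been absorbed into the explicit solution formulas and only enter through $X_n, Y_n$ themselves, which are positive by hypothesis), so each component of the exponential factor lies in $(0,\infty)$; therefore $X_{n+1} = X_n \ast \exp\{\cdots\} > 0$ componentwise. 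Having established $X_{n+1} > 0$, the same reasoning applied to the second line of \eqref{eq:5} gives $Y_{n+1} = Y_n \ast \exp\{(\Gamma^{(1)} X_{n+1} - \eta^{(1)} - \tfrac12 \Lambda^{(1)})h + \Sigma^{(1)}(W(t_{n+1}) - W(t_n))\} > 0$. This closes the induction for \eqref{eq:5}.

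For the Strang method \eqref{eq:6} the argument is the same, carried out in three stages following the composition order ${\varphi}_{h/2}^2 \circ {\varphi}_h^1 \circ {\varphi}_{h/2}^2$: from $X_n > 0$ conclude $\widetilde X_n > 0$; then from $\widetilde X_n > 0$ conclude $Y_{n+1} > 0$; then from $\widetilde X_n > 0$ and $Y_{n+1} > 0$ conclude $X_{n+1} > 0$. At each stage the only thing to check is that the argument of the exponential is a finite real vector almost surely, which holds because Brownian increments are a.s.\ finite and the drift terms involve only the already-positive iterates linearly. Iterating from $n=0$ to $n=N-1$ yields the claim for all $n \in \{1,\dots,N\}$.

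There is essentially no hard part here: the result is immediate from the explicit exponential form of the splitting maps ${\varphi}_t^1, {\varphi}_t^2$, which is precisely the advantage of having solved the two subsystems \eqref{eq:3}--\eqref{eq:4} in closed form. The only point deserving a sentence of care is the well-definedness of the updates — i.e.\ that no iterate can become zero or negative in finite time so that the logarithms implicit in the original Hamiltonian splitting never break down — and this is exactly what the induction guarantees. One may remark that, unlike implicit Runge--Kutta schemes where positivity requires solving a nonlinear equation and checking a sign condition, here positivity is manifest from the formula, with no step-size restriction on $h$.
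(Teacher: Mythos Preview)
Your proposal is correct and follows essentially the same argument as the paper: induction on $n$, using that each update multiplies a strictly positive vector componentwise by an exponential of a finite real vector. The only cosmetic difference is that for the Strang method the paper happens to write out $Y_{n+1}>0$ before $\widetilde X_n>0$, whereas you (more naturally) verify $\widetilde X_n>0$ first; the logic is identical.
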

\begin{proof}
	First we consider the stochastic Strang splitting method \eqref{eq:6}. For $n=0,1,\cdots,N-1$, assume that $X_n>0$ and $Y_n>0$, then we can obtain directly
	\begin{equation}{\notag}
		Y_{n+1}=Y_{n} {\ast}  \exp  \left\{\left(\Gamma ^{(1)} \widetilde{X}_{n}-\eta^{(1)}-\frac{1}{2} \Lambda^{(1)}\right) h+\Sigma^{(1)} \left(W(t_{n+1})-W(t_n)\right)\right\}>0.
	\end{equation}
	Similarly, we have
	\begin{equation}{\notag}
		\widetilde{X}_n =X_{n} {\ast}  \exp  \left\{\left(-\Gamma ^{(2)} Y_{n}+\eta^{(2)}-\frac{1}{2} \Lambda^{(2)}\right) {\frac{h}{2} }+\Sigma^{(2)}\left(W\left(t_{n}+{\frac{h}{2}}\right)-W(t_n)\right)\right\}>0,
	\end{equation}
	and this implies
	\begin{equation}{\notag}
		X_{n+1}=\widetilde{X}_n {\ast}  \exp  \left\{\left(-\Gamma ^{(2)} Y_{n+1}+\eta^{(2)}-\frac{1}{2} \Lambda^{(2)}\right) \frac{h}{2}+\Sigma^{(2)}\left(W(t_{n+1})-W\left(t_n+{\frac{h}{2} }\right)\right)\right\}>0.
	\end{equation}
	Proceeding by induction, we arrive at the assertion of \eqref{eq:6}. By similar arguments, it can be shown that the numerical solution of \eqref{eq:5} satisfies $(X_n^\top,~ Y_n^\top)^\top \in \mathbb{R}_{+}^{2d}$.
\end{proof}
\subsection{Stochastic symplecticity}\label{sec:symplectic}
It is well-established that certain stochastic Hamiltonian systems inherently preserve geometric structures, notably stochastic symplecticity. Conventional numerical methods, such as the EM method, however, often fail to maintain these structures, resulting in a loss of long-term phase space fidelity. To address this, we derive in this section the discrete stochastic symplectic conservation law for the stochastic splitting methods introduced in Section \ref{sec:positivity}. Through this framework, we rigorously prove that both stochastic positivity-preserving Lie--Trotter and stochastic positivity-preserving Strang splitting methods preserve discrete stochastic symplectic conservation law. Given the methodological parallels between these methods, we present a detailed analysis exclusively for the stochastic Strang splitting method, with analogous arguments applying to stochastic Lie--Trotter splitting method.

\begin{theorem}\label{T2}
	Assume that the coefficient matrices $\Gamma^{(k)}$, $k = 1, 2$ are diagonal. Then \eqref{eq:3} and \eqref{eq:4} possess the stochastic symplectic conservation law.
\end{theorem}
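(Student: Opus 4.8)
The plan is to show that each split subsystem \eqref{eq:3} and \eqref{eq:4} is itself a stochastic Hamiltonian system with respect to the same structure matrix $K(Z)$, and then invoke the standard fact that the phase flow of any such system preserves the associated two-form. Concretely, subsystem \eqref{eq:3} has the form $dZ = K^{-1}(Z)\nabla_Z H_{1,X}(X)\,dt + K^{-1}(Z)\nabla_Z H_{2,X}(X)\circ dW(t)$ and subsystem \eqref{eq:4} the analogous form with $H_{1,Y}, H_{2,Y}$; both are instances of the non-canonical stochastic Hamiltonian template \eqref{eq:2}, just with the Hamiltonians $H_1, H_2$ replaced by their $X$- or $Y$-components. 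Since Lemma \ref{p3} establishes the symplectic conservation law $\mathrm{d}Z\wedge K(Z)\,\mathrm{d}Z = \mathrm{d}Z_0\wedge K(Z_0)\,\mathrm{d}Z_0$ for the full system, the same argument — reproduced for the restricted Hamiltonians — yields the claim for each subsystem.

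First I would write down the variational (first-variation) equation associated with \eqref{eq:3}: differentiating with respect to the initial datum $Z_0$ gives a linear SDE for $J(t) := \partial\varphi_t^1(Z_0)/\partial Z_0$. Then I would compute $\mathrm{d}\big(J^\top K(\varphi_t^1)J\big)$ using the Stratonovich product rule (which obeys the ordinary Leibniz rule, so no It\^o correction terms appear). The key algebraic identity to verify is that for a vector field of the form $f = K^{-1}(Z)\nabla_Z G(Z)$, the combination $\partial_Z\!\big(K^{-1}\nabla G\big)^\top K + K\,\partial_Z\!\big(K^{-1}\nabla G\big) + (\mathrm{d}/\mathrm{d}t\text{ along flow})K$ vanishes — this is precisely the condition that the flow of $f$ preserves the two-form $\mathrm{d}Z\wedge K\,\mathrm{d}Z$, and it holds because $G$ is a genuine scalar (resp. vector of scalars in the diffusion part) Hamiltonian and $K$ is the same skew-symmetric structure matrix throughout. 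The diagonality hypothesis on $\Gamma^{(k)}$ is exactly what guarantees $H_{1,X}, H_{1,Y}$ are well-defined (the cross terms $\gamma_{ij}^{(k)}x_iy_j$ with $i\neq j$ would otherwise couple $X$ and $Y$ and break the decomposition \eqref{eq:42}), so I would state that reduction up front.

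The main obstacle is the bookkeeping for the noise term: $H_2$ is vector-valued (one Hamiltonian $H_2^{(k)}$ per Wiener component $W_k$), so one must check the symplecticity identity simultaneously for each of the $m$ diffusion vector fields $K^{-1}\nabla_Z H_{2,X}^{(k)}$, and verify that the Stratonovich SDE for $J^\top K(\varphi_t^1) J$ has all drift and all diffusion coefficients equal to zero. This is routine once the single identity above is in hand, since each term has the same structure; the cleanest presentation is to prove the identity once for an abstract scalar Hamiltonian $G$ with structure matrix $K$ and then apply it to $G = H_{1,X}, H_{2,X}^{(1)}, \dots, H_{2,X}^{(m)}$ (and symmetrically for the $Y$-subsystem). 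I would also note that, because $K^*(Z) = \operatorname{diag}\{1/(x_iy_i)\}$ depends on $Z$, the term $(\mathrm{d}/\mathrm{d}t)K$ along the flow is nonzero and must be retained; its cancellation against the two Jacobian terms is the heart of the computation. Finally I would remark that the argument is verbatim the one in \cite[Section 2]{Hong} used to prove Lemma \ref{p3}, restricted to the split Hamiltonians, so one may alternatively just cite that and observe the split systems are of the same form.
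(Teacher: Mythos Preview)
Your approach is correct but genuinely different from the paper's. The paper does not invoke the abstract Hamiltonian structure at all: instead it exploits the \emph{explicit closed-form solutions} $\varphi_t^1$ and $\varphi_t^2$ already derived in \eqref{eq1111}--\eqref{eq2222}, writes down the Jacobian $\partial\varphi_t^1(Z_0)/\partial Z_0$ as a concrete $2\times 2$ block matrix (with blocks $I$, $0$, $\Gamma^{(1)}\overline{Y}t$, and the diagonal matrix $Y^* = \operatorname{diag}\{y_i/y_i^0\}$), and then verifies the identity $J^\top K(\varphi_t^1)J = K(Z_0)$ by direct matrix multiplication. The diagonality of $\Gamma^{(1)}$ is used at the very end to make the off-diagonal block $(\Gamma^{(1)}\overline{Y})^\top K^*(Z)t - K^*(Z)\Gamma^{(1)}\overline{Y}t$ vanish, and the relation $K^*(Z)Y^* = K^*(Z_0)$ finishes the computation.

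Your structural argument---observing that each subsystem is itself of the template \eqref{eq:2} with the same $K(Z)$ and then reproducing (or citing) the variational-equation proof of Lemma~\ref{p3}---is more conceptual and explains \emph{why} symplecticity must hold without any computation; it also makes transparent that nothing special about the particular Hamiltonians $H_{1,X},H_{2,X}$ is used beyond their being scalar functions of $Z$. The paper's route, by contrast, is entirely self-contained (no appeal to the proof in \cite{Hong}), exhibits concretely where diagonality enters, and leverages the fact that the exact flows are already in hand, so the Jacobian is available without solving any variational SDE. Either argument is complete; yours is shorter in principle but outsources the key step, while the paper's is longer but explicit.
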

\begin{proof}
	According to Lemma \ref{p3}, to demonstrate that the subsystems \eqref{eq:3} preserves the stochastic symplectic conservation law, it is necessary to prove 
	$${\left[\frac{\partial \varphi_{t}^{1}\left(Z_{0}\right)}{\partial Z_{0}}\right]^{\top} K\left(\varphi_{t}^{1}\left(Z_{0}\right)\right)\left[\frac{\partial \varphi_{t}^{1}\left(Z_{0}\right)}{\partial Z_{0}}\right] }=K(Z_0).$$
	The initial value is given by the vector $Y_0=({y_{1}^{0}},\ldots,{y_{d}^{0}})^{\top}$. Define the diagonal matrix 
	\begin{equation}\notag
		Y^{*}=\text{diag}\left\{\frac{y_{1}}{{y_{1}^{0}}},\ldots,\frac{y_{d}}{{y_{d}^{0}}}\right\},
	\end{equation}
	it yields that
	\begin{equation}\label{eq:14}
		\begin{aligned}
			& {\left[\frac{\partial \varphi_{t}^{1}\left(Z_{0}\right)}{\partial Z_{0}}\right]^{\top} K\left(\varphi_{t}^{1}\left(Z_{0}\right)\right)\left[\frac{\partial \varphi_{t}^{1}\left(Z_{0}\right)}{\partial Z_{0}}\right] } \\[2mm] 
			& =\left(\begin{array}{cc}
				I & 0 \\[2mm] 
				\Gamma ^{(1)} \overline{Y} t & Y^{*}
			\end{array}\right)^{\top}\left(\begin{array}{cc}
				0 & -K^{*}(z) \\[2mm] 
				K^{*}(z) & 0
			\end{array}\right)\left(\begin{array}{cc}
				I & 0 \\[2mm] 
				\Gamma ^{(1)} \overline{Y} t & Y^{*}
			\end{array}\right)\\[2mm]
			& =\begin{pmatrix}{{\left(\Gamma ^{(1)}\overline{Y}\right)^{\top}K^{*}(Z)t-K^{*}(Z)\Gamma ^{(1)}\overline{Y}}t} & {-K^{*}(Z)Y^{*}}\\[2mm] 
				{Y^{*}K^{*}(Z)} & {0}\end{pmatrix}.
		\end{aligned}
	\end{equation}
	Given that $\Gamma^{(1)}$, $K^{*}(Z)$, $Y^{*}$ and $\overline{Y}$ are all diagonal matrices, the Eq. \eqref{eq:14} can be rewritten as
	\begin{equation}\label{eq:25}
		\begin{aligned}
			{\left[\frac{\partial \varphi_{t}^{1}\left(Z_{0}\right)}{\partial Z_{0}}\right]^{\top} K\left(\varphi_{t}^{1}\left(Z_{0}\right)\right)\left[\frac{\partial \varphi_{t}^{1}\left(Z_{0}\right)}{\partial Z_{0}}\right] } & =\left(\begin{array}{cc}{0}&{-K^{*}(Z)Y^{*}}\\[2mm]
				{K^{*}(Z)Y^{*}}&{0}\end{array}\right),
		\end{aligned}
	\end{equation}
	which by  $K^{*}(Z)Y^{*}=K^{*}(Z_0)$, implies that \eqref{eq:25} is equivalent to
	\begin{equation}\label{eq:26}
		\begin{aligned}
			{\left[\frac{\partial \varphi_{t}^{1}\left(Z_{0}\right)}{\partial Z_{0}}\right]^{\top} K\left(\varphi_{t}^{1}\left(Z_{0}\right)\right)\left[\frac{\partial \varphi_{t}^{1}\left(Z_{0}\right)}{\partial Z_{0}}\right] } =\begin{pmatrix}{0}&{-K^{*}(Z_0)}\\[2mm]
				{K^{*}(Z_{0})}&{0}\end{pmatrix} =K(Z_0).
		\end{aligned}
	\end{equation}
	Therefore, it has been demonstrated that the phase flow of subsystem \eqref{eq:3} satisfies the stochastic symplectic conservation law. The proof for subsystem \eqref{eq:4} follows a similar approach and is thus omitted for brevity.
\end{proof}

In particular, when $t = h$ or $t = h/2$, the phase flows corresponding to the two subsystems continue to satisfy Eq. \eqref{eq:15}. Consequently, we can demonstrate that their composite operation also preserves stochastic symplectic conservation. This implies that the stochastic splitting methods \eqref{eq:5} and \eqref{eq:6} preserves the stochastic symplectic conservation law.
\begin{theorem}
	The stochastic Strang splitting method preserves the discrete stochastic symplectic conservation law
	\begin{equation}\label{eq:12}
		{dZ_{n+1}\wedge K(Z_{n+1})dZ_{n+1}=dZ_{n}\wedge K(Z_{n})dZ_{n}},\quad a.s.,
	\end{equation}
	where $Z_n=(X_n^\top,~Y_n^\top)^\top$ and $n\in{\mathbb{N}}$.
	
\end{theorem}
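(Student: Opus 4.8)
The plan is to reduce the symplecticity of the composed map to the symplecticity of the three constituent flows, which has already been established in Theorem~\ref{T2} (for $\varphi^2_{t}$, the subsystem \eqref{eq:4}, specialized to $t = h/2$) and by the analogous argument for $\varphi^1_h$ (the subsystem \eqref{eq:3} specialized to $t = h$). Recall that the Strang method is $Z_{n+1} = \varphi^2_{h/2} \circ \varphi^1_{h} \circ \varphi^2_{h/2}(Z_n)$. The key algebraic fact is that the defining relation \eqref{eq:15}, written in the form $\left[\partial\psi/\partial u\right]^\top K(\psi(u))\left[\partial\psi/\partial u\right] = K(u)$, is closed under composition: if both $\psi$ and $\chi$ satisfy it, then so does $\psi \circ \chi$, by the chain rule $\partial(\psi\circ\chi)/\partial u = \left[\partial\psi/\partial v\right]_{v=\chi(u)} \cdot \left[\partial\chi/\partial u\right]$ followed by a direct substitution.

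First I would introduce intermediate variables: set $\zeta = \varphi^2_{h/2}(Z_n)$, $\xi = \varphi^1_{h}(\zeta)$, and $Z_{n+1} = \varphi^2_{h/2}(\xi)$, and abbreviate the three Jacobians $A = \partial\zeta/\partial Z_n$, $B = \partial\xi/\partial\zeta$, $C = \partial Z_{n+1}/\partial\xi$, so that $\partial Z_{n+1}/\partial Z_n = CBA$. From Theorem~\ref{T2} (and its counterpart for \eqref{eq:3}) we have the three identities $A^\top K(\zeta) A = K(Z_n)$, $B^\top K(\xi) B = K(\zeta)$, and $C^\top K(Z_{n+1}) C = K(\xi)$. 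Then I would compute
\begin{equation}\notag
  (CBA)^\top K(Z_{n+1})(CBA) = A^\top B^\top \bigl(C^\top K(Z_{n+1}) C\bigr) B A = A^\top B^\top K(\xi) B A = A^\top K(\zeta) A = K(Z_n),
\end{equation}
which is exactly relation \eqref{eq:15} for the one-step map. Finally, I would translate this Jacobian identity back into the differential-form statement \eqref{eq:12}: taking the stochastic differential of $Z_{n+1}$ with respect to the initial data $Z_n$ (treating $Z_n$ as varying over a family of initial conditions) gives $\mathrm{d}Z_{n+1} = (CBA)\,\mathrm{d}Z_n$, whence $\mathrm{d}Z_{n+1} \wedge K(Z_{n+1})\,\mathrm{d}Z_{n+1} = \mathrm{d}Z_n \wedge (CBA)^\top K(Z_{n+1})(CBA)\,\mathrm{d}Z_n = \mathrm{d}Z_n \wedge K(Z_n)\,\mathrm{d}Z_n$, almost surely.

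The one subtle point I would be careful about — and what I expect to be the main obstacle to a fully rigorous write-up — is the composition of the symplectic relation when $K$ is state-dependent (non-canonical): one must make sure that in each of the three identities the matrix $K$ is evaluated at the correct intermediate point, and that these match up in the telescoping product above. The cancellation works precisely because the "output" argument of one flow's identity is the "input" argument of the next ($\xi$ appears as $K(\xi)$ on the right of $C$'s identity and on the left of $B$'s identity, and likewise for $\zeta$). A second, minor point is verifying that the Strang half-step flow $\varphi^2_{h/2}$ genuinely satisfies \eqref{eq:15} with $t$ replaced by $h/2$ — but this is immediate from Theorem~\ref{T2}, which is proved for arbitrary $t \ge 0$, and the remark following it already notes that $t = h$ and $t = h/2$ are admissible. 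Since Theorem~\ref{T2} was only stated in the excerpt for subsystems \eqref{eq:3}–\eqref{eq:4} (i.e.\ $\varphi^1_t$ and $\varphi^2_t$), I would also explicitly invoke the "analogous argument" for $\varphi^1_t$ that the excerpt defers, or else cite it, so that all three factor identities are on equal footing. The analogous statement for the Lie--Trotter method \eqref{eq:5} follows identically with only two factors instead of three.
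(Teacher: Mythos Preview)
Your proposal is correct and follows essentially the same approach as the paper: introduce the two intermediate points, factor the Jacobian of the Strang step via the chain rule, and telescope using the symplecticity of each subflow from Theorem~\ref{T2}, being careful that $K$ is evaluated at the matching intermediate state at each stage. The paper's proof is organized identically (with $\widetilde{Z}_n$ and $\hat{Z}_n$ playing the roles of your $\zeta$ and $\xi$), so there is nothing to add.
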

\begin{proof}
	Define $\widetilde{Z}_{n}:= \varphi_{\frac{h}{2}}^{2}(Z_{n})$ and $\hat{Z}_{n}:= \varphi_{h}^{1}\left(\widetilde{Z}_{n}\right)$. According to Lemma \ref{p3}, to demonstrate the assertion we need to prove
	\begin{equation}\label{eq:18}
		\left[{\frac{\partial Z_{n+1}}{\partial{Z_{n}}}}\right]^{\top}K\left(Z_{n+1}\right)\left[\frac{\partial Z_{n+1}}{\partial{Z_{n}}}\right]\\=K(Z_n).
	\end{equation}
	Note that
	\begin{equation}\label{eq:44}
		Z_{n+1}=	{\varphi} _{\frac{h}{2} }^{2} \circ{\varphi} _{h}^{1} 	\circ{\varphi} _{\frac{h}{2} }^{2}(Z_n).
	\end{equation}
	Substituting $\widetilde{Z}_{n} = \varphi_{\frac{h}{2}}^{2}(Z_{n})$, $\hat{Z}_{n} = \varphi_{h}^{1}\left(\widetilde{Z}_{n}\right)$ and (\ref{eq:44}) into the left-hand side of (\ref{eq:18}), then (\ref{eq:18}) can be expressed as 
	\begin{equation}\label{eq:19}
		\left[\frac{\partial \varphi_{\frac{h}{2} }^{2}\left(\hat{Z}_{n}\right)}{\partial \hat{Z}_{n}}\frac{\partial \varphi_{h}^{1}\left(\widetilde{Z}_{n}\right)}{\partial \widetilde{Z}_{n}}\frac{\partial \varphi_{\frac{h}{2}}^2\left(Z_{n}\right)}{\partial Z_{n}}\right]^{\top}K\left(\varphi_{\frac{h}{2} }^{2}\left(\hat{Z}_{n}\right)\right)\left[\frac{\partial\varphi_{\frac{h}{2} }^{2}\left(\hat{Z}_{n}\right)}{\partial \hat{Z}_{n}}\frac{\partial\varphi_{h}^{1}\left(\widetilde{Z}_{n}\right)}{\partial \widetilde{Z}_{n}}\frac{\partial\varphi_{\frac{h}{2} }^{2}(Z_{n})}{\partial Z_{n}}\right]=K(Z_n).
	\end{equation}
	From this, it follows that \eqref{eq:19} is equivalent to
	\begin{equation}\label{eq:20}
		\begin{split}
			\left[\frac{\partial\varphi_{\frac{h}{2} }^{2}({Z}_{n})}{\partial{Z}_{n}}\right]^{\top}&\left[\frac{\partial\varphi_{h}^1\left(\widetilde{Z}_{n}\right)}{\partial \widetilde{Z}_{n}}\right]^{\top}\left(\left[\frac{\partial \varphi_{\frac{h}{2} }^{2}\left(\hat{Z}_{n}\right)}{\partial \hat{Z}_{n}}\right]^{\top}K\left(\varphi_{\frac{h}{2} }^{2}\left(\hat{Z}_{n}\right)\right)\left[\frac{\partial\varphi_{\frac{h}{2} }^{2}\left(\hat{Z}_{n}\right)}{\partial \hat{Z}_{n}}\right]\right)\\[2mm]
			&\cdot\left[\frac{\partial\varphi_{h}^{1}\left(\widetilde{Z}_{n}\right)}{\partial \widetilde{Z}_{n}}\right]\left[\frac{\partial\varphi_{\frac{h}{2} }^{2}({Z}_{n})}{\partial Z_{n}}\right]=K(Z_n).
		\end{split}
	\end{equation}
	Since $\varphi_{\frac{h}{2} }^{2}$ is the exact solution of  \eqref{eq:4}, according to Theorem \ref{T2}, we get
	\begin{equation}\label{eq:21}
		\left[\frac{\partial \varphi_{\frac{h}{2} }^{2}\left(\hat{Z}_{n}\right)}{\partial \hat{Z}_{n}}\right]^{\top}K\left(\varphi_{\frac{h}{2} }^{2}\left(\hat{Z}_{n}\right)\right)\left[\frac{\partial\varphi_{\frac{h}{2} }^{2}\left(\hat{Z}_{n}\right)}{\partial \hat{Z}_{n}}\right]=
		K\left(\hat{Z}_{n}\right).
	\end{equation}
	Substituting (\ref{eq:21}) into (\ref{eq:20}), then we need to demonstrate
	\begin{equation}\label{eq:22}
		\left[\frac{\partial\varphi_{\frac{h}{2} }^{2}({Z}_{n})}{\partial{Z}_{n}}\right]^{\top}\left(\left[\frac{\partial\varphi_{h}^1\left(\widetilde{Z}_{n}\right)}{\partial \widetilde{Z}_{n}}\right]^{\top}K\left( \varphi_{h}^{1}\left(\widetilde{Z}_{n}\right)\right)\left[\frac{\partial\varphi_{h}^{1}\left(\widetilde{Z}_{n}\right)}{\partial \widetilde{Z}_{n}}\right]\right)\left[\frac{\partial\varphi_{\frac{h}{2} }^{2}({Z}_{n})}{\partial Z_{n}}\right]=K(Z_n).
	\end{equation}
	Similarly, it follows from $\varphi_h^{1}$ is the exact solution of \eqref{eq:3} that
	\begin{equation}\label{eq:23}
		\left[\frac{\partial\varphi_{h}^1\left(\widetilde{Z}_{n}\right)}{\partial \widetilde{Z}_{n}}\right]^{\top}K\left( \varphi_{h}^{1}\left(\widetilde{Z}_{n}\right)\right)\left[\frac{\partial\varphi_{h}^{1}\left(\widetilde{Z}_{n}\right)}{\partial \widetilde{Z}_{n}}\right]=K\left(\widetilde{Z}_{n} \right).
	\end{equation}
	Substituting \eqref{eq:23} into \eqref{eq:22}, then we need to establish 
	\begin{equation}\label{eq:24}
		\left[\frac{\partial\varphi_{\frac{h}{2} }^{2}({Z}_{n})}{\partial{Z}_{n}}\right]^{\top}K\left( \varphi_{\frac{h}{2} }^{2}\left({Z}_{n}\right)\right)\left[\frac{\partial\varphi_{\frac{h}{2} }^{2}({Z}_{n})}{\partial Z_{n}}\right]
		=K\left( Z_n\right).
	\end{equation}
	Since $\varphi_{\frac{h}{2} }^{2}$ is the exact solution of the subsystem (\ref{eq:4}), according to Theorem \ref{T2}, it can be proved that (\ref{eq:24}) holds. 
\end{proof}
The proof idea of the stochastic Strang splitting method can be used. Similarly, it can be shown that the stochastic  Lie--Trotter splitting method preserves discrete stochastic symplectic conservation law.
\begin{theorem}
	The stochastic Lie--Trotter splitting method preserves the discrete stochastic symplectic conservation law
	\begin{equation}{\notag}
		{dZ_{n+1}\wedge K(Z_{n+1})dZ_{n+1}=dZ_{n}\wedge K(Z_{n})dZ_{n}},\quad a.s.,
	\end{equation}
	where $Z_n=(X_n^\top,~Y_n^\top)^\top$ and $n\in{\mathbb{N}}$.
\end{theorem}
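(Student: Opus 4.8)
The plan is to mirror the argument already given for the stochastic Strang splitting method, which is in fact shorter here because only two flows are composed. By Lemma~\ref{p3} it suffices to verify the Jacobian identity
\[
	\left[\frac{\partial Z_{n+1}}{\partial Z_{n}}\right]^{\top} K\left(Z_{n+1}\right)\left[\frac{\partial Z_{n+1}}{\partial Z_{n}}\right] = K\left(Z_{n}\right),\quad a.s.,
\]
and then to feed in the multiplicative structure $Z_{n+1}=\varphi_{h}^{1}\circ\varphi_{h}^{2}(Z_{n})$ together with Theorem~\ref{T2}. First I would introduce the intermediate state $\widetilde{Z}_{n}:=\varphi_{h}^{2}(Z_{n})$, so that $Z_{n+1}=\varphi_{h}^{1}(\widetilde{Z}_{n})$, and apply the chain rule pathwise to obtain $\partial Z_{n+1}/\partial Z_{n}=\bigl[\partial\varphi_{h}^{1}(\widetilde{Z}_{n})/\partial\widetilde{Z}_{n}\bigr]\bigl[\partial\varphi_{h}^{2}(Z_{n})/\partial Z_{n}\bigr]$.

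Substituting this into the left-hand side of the identity and regrouping so that the factors belonging to $\varphi_{h}^{1}$ sit in the middle, the expression becomes
\[
	\left[\frac{\partial\varphi_{h}^{2}(Z_{n})}{\partial Z_{n}}\right]^{\top}\!\left(\left[\frac{\partial\varphi_{h}^{1}(\widetilde{Z}_{n})}{\partial\widetilde{Z}_{n}}\right]^{\top}\! K\!\left(\varphi_{h}^{1}(\widetilde{Z}_{n})\right)\!\left[\frac{\partial\varphi_{h}^{1}(\widetilde{Z}_{n})}{\partial\widetilde{Z}_{n}}\right]\right)\!\left[\frac{\partial\varphi_{h}^{2}(Z_{n})}{\partial Z_{n}}\right].
\]
Since $\varphi_{h}^{1}$ is the exact time-$h$ flow of subsystem~\eqref{eq:3}, Theorem~\ref{T2} (valid for every fixed $t$, in particular $t=h$, as noted right after its proof) shows that the parenthesized middle factor equals $K(\widetilde{Z}_{n})$. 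The remaining expression is then $\bigl[\partial\varphi_{h}^{2}(Z_{n})/\partial Z_{n}\bigr]^{\top}K(\widetilde{Z}_{n})\bigl[\partial\varphi_{h}^{2}(Z_{n})/\partial Z_{n}\bigr]$, and recalling $\widetilde{Z}_{n}=\varphi_{h}^{2}(Z_{n})$ this is precisely $\bigl[\partial\varphi_{h}^{2}(Z_{n})/\partial Z_{n}\bigr]^{\top}K\bigl(\varphi_{h}^{2}(Z_{n})\bigr)\bigl[\partial\varphi_{h}^{2}(Z_{n})/\partial Z_{n}\bigr]$, which equals $K(Z_{n})$ by a second application of Theorem~\ref{T2}, now to the exact flow $\varphi_{h}^{2}$ of subsystem~\eqref{eq:4}. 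Induction on $n$, with the deterministic initial datum as base case, then yields the discrete conservation law for all $n\in\mathbb{N}$.

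I do not expect a genuine obstacle: the content is strictly contained in what has already been done for the Strang method. The only point that deserves care is the same one underlying that proof, namely that Theorem~\ref{T2} must be invoked at the finite step size $t=h$ rather than merely infinitesimally — which is exactly why its proof was carried out for general $t$ — and, in applying it to $\varphi_{h}^{2}$, one should double-check (as in \eqref{eq:14} for $\varphi_{h}^{1}$) that $\partial\varphi_{h}^{2}/\partial Z_{n}$ is block upper-triangular with diagonal blocks $\operatorname{diag}\{x_{i}/x_{i}^{0}\}$ and $I$, so that the matrix products collapse to $K(\cdot)$ thanks to the diagonality of $\Gamma^{(k)}$, $K^{*}$, $\overline{X}$ and $\overline{Y}$.
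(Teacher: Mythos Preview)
Your proposal is correct and follows exactly the approach the paper intends: the paper does not give a separate proof but simply states that ``the proof idea of the stochastic Strang splitting method can be used,'' and your argument is precisely the two-factor version of that proof, reducing to the Jacobian identity via Lemma~\ref{p3}, splitting the Jacobian by the chain rule along $Z_{n+1}=\varphi_{h}^{1}\circ\varphi_{h}^{2}(Z_{n})$, and collapsing the inner and outer factors successively using Theorem~\ref{T2}. The only superfluous remark is the appeal to induction at the end --- the one-step identity is what is claimed and your computation already establishes it for arbitrary $n$.
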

\subsection{Uniform boundedness of the numerical solution}\label{sec:bound}
In this subsection, we will study the uniform boundedness of the numerical solutions of \eqref{eq:5} and \eqref{eq:6} in the case of $\Sigma^{(2)} \equiv 0$.
\begin{theorem}\label{T4}
	Given any deterministic initial value $(X_0^\top,~ Y_0^\top)^\top \in \mathbb{R}_{+}^{2d}$, for any $p\geq1$, the numerical solution $X_n\in \mathbb{R}_{+}^{d}$ of the stochastic Strang splitting method \eqref{eq:6} is uniformly bounded
	$$\underset{n=1,\cdots,N}{\sup} \mathbb{E}\big[|X_n|^p\big]{\le}  C .$$
	Moreover, when $\Sigma^{(2)}\equiv0$, the $p$-th moment of $Y_n\in \mathbb{R}_{+}^{d}$ is also uniformly bounded
	$$	\underset{n=1,\cdots,N}{\sup} \mathbb{E}\big[|Y_n|^p\big]{\le}  C,$$
	where the positive constant $C=C(X_0, Y_0,\Gamma^{(1)},\Gamma^{(2)},\eta^{(1)},\eta^{(2)},\Sigma^{(1)},\Sigma^{(2)})$.
\end{theorem}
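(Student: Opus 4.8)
The plan is to exploit the explicit, entrywise multiplicative structure of the Strang scheme \eqref{eq:6} together with the positivity guaranteed by Theorem~\ref{T1}: first obtain a pathwise (a.s.) or moment bound on $X_n$ and $\widetilde X_n$, then propagate it into a log-normal-type estimate for $Y_n$ whose $p$-th moment is controlled uniformly in $n$ and $N$.

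First I would bound $X_n$. Since $\Gamma^{(2)}>0$ and $Y_n,Y_{n+1}\in\mathbb{R}_+^d$ by Theorem~\ref{T1}, the factors $\exp\{-\Gamma^{(2)}Y_n\,h/2\}$ and $\exp\{-\Gamma^{(2)}Y_{n+1}\,h/2\}$ are $\le 1$ componentwise. Multiplying the three lines of \eqref{eq:6} and discarding these factors gives, entry by entry,
$$X_{n+1}\le X_n\ast\exp\Big\{\big(\eta^{(2)}-\tfrac12\Lambda^{(2)}\big)h+\Sigma^{(2)}\big(W(t_{n+1})-W(t_n)\big)\Big\},$$
and iterating from $n=0$ (the Wiener increments telescope to $W(t_n)$) yields $X_n\le X_0\ast\exp\{(\eta^{(2)}-\tfrac12\Lambda^{(2)})t_n+\Sigma^{(2)}W(t_n)\}$. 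Raising to the $p$-th power componentwise and applying the Gaussian moment generating function, with $[\Sigma^{(2)}W(t_n)]_i\sim\mathcal N(0,t_n\Lambda_i^{(2)})$, bounds $\mathbb{E}[|X_n|^p]$ by a constant depending only on $X_0,\eta^{(2)},\Sigma^{(2)},T,p$, uniformly in $n$ and $N$. Moreover, when $\Sigma^{(2)}\equiv0$ (hence $\Lambda^{(2)}=0$), the same manipulation applied to the first line of \eqref{eq:6} gives the deterministic bound $\widetilde X_n\le X_0\ast\exp\{\eta^{(2)}(t_n+h/2)\}\le X_0\ast\exp\{\eta^{(2)}T\}=:\widetilde C$ almost surely, for all $n=0,\dots,N-1$.

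Next I would treat $Y_n$ under $\Sigma^{(2)}\equiv0$. Iterating the $Y$-recursion in \eqref{eq:6} componentwise gives
$$Y_n=Y_0\ast\exp\Big\{h\sum_{k=0}^{n-1}\big(\Gamma^{(1)}\widetilde X_k-\eta^{(1)}-\tfrac12\Lambda^{(1)}\big)+\Sigma^{(1)}W(t_n)\Big\}.$$
Using $\widetilde X_k\le\widetilde C$ a.s. and dropping the non-positive terms $-\eta^{(1)}$ and $-\tfrac12\Lambda^{(1)}$, I obtain the entrywise bound $Y_n\le Y_0\ast\exp\{T\,\Gamma^{(1)}\widetilde C+\Sigma^{(1)}W(t_n)\}$. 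Then $\mathbb{E}[|Y_n|^p]\le C(d,p)\sum_{i=1}^{d}(y_i^{0})^{p}\,e^{pT[\Gamma^{(1)}\widetilde C]_i}\,\mathbb{E}\big[\exp\{p[\Sigma^{(1)}W(t_n)]_i\}\big]$, and since $[\Sigma^{(1)}W(t_n)]_i$ is centred Gaussian with variance $t_n\Lambda_i^{(1)}\le T\Lambda_i^{(1)}$, the last expectation equals $\exp\{\tfrac{p^2}{2}t_n\Lambda_i^{(1)}\}\le\exp\{\tfrac{p^2}{2}T\Lambda_i^{(1)}\}$. Every constant is independent of $n$ and $N$, so $\sup_{n}\mathbb{E}[|Y_n|^p]\le C$.

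The individual estimates are routine; the conceptual crux — and the reason the hypothesis $\Sigma^{(2)}\equiv0$ is needed for the $Y_n$ bound — is the pathwise boundedness of $\widetilde X_n$. If $\Sigma^{(2)}\ne0$, then $\widetilde X_k$ is log-normally distributed, so the exponent $h\sum_k\Gamma^{(1)}\widetilde X_k$ appearing in $Y_n$ is an exponential of a Gaussian, and $\mathbb{E}[\exp\{ph\,\Gamma^{(1)}\widetilde X_k\}]$ is infinite (a double-exponential tail). Thus the essential step is to isolate the case $\Sigma^{(2)}\equiv0$, where $\widetilde X_n$ enjoys a uniform deterministic bound, and then to keep careful track that each estimate is controlled through $T=Nh$ rather than through the number of steps, which is what secures uniformity in $N$.
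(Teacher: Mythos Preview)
Your proposal is correct and follows essentially the same route as the paper: drop the $-\Gamma^{(2)}Y$ factors to get a componentwise upper bound on $X_{n+1}$ in terms of $X_n$, telescope the increments, and apply the Gaussian moment generating function; then, under $\Sigma^{(2)}\equiv 0$, use the resulting deterministic a.s.\ bound on $\widetilde X_k$ inside the iterated $Y$-recursion and repeat the MGF computation. The only cosmetic differences are that you retain the $-\tfrac12\Lambda^{(2)}$ term (slightly sharper, but immaterial) and you spell out explicitly why $\Sigma^{(2)}\not\equiv 0$ would make $\mathbb{E}\exp\{ph\,\Gamma^{(1)}\widetilde X_k\}$ blow up, which the paper relegates to a remark.
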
 
\begin{proof}
	First we consider the uniformly boundedness of the numerical solution $X_{n}$. For any $n=0,1,\cdots, N-1$, according to the stochastic Strang splitting method \eqref{eq:6}, we have
	\begin{align*}
		\widetilde{X}_{n}=X_{n} * \exp \left\{\left(-\Gamma ^{(2)} Y_{n}+\eta^{(2)}-\frac{1}{2} \Lambda^{(2)}\right)  \frac{h}{2}+\Sigma ^{(2)}\left(W\left(t_{n}+\frac{h}{2}\right)-W\left(t_{n}\right)\right)\right\},
	\end{align*}
	and
	\begin{align}\label{eq:28}
		X_{n+1}=\widetilde{X}_{n} & * \exp \left\{\left(-\Gamma ^{(2)}Y_{n+1}+\eta^{(2)}-\frac{1}{2}\Lambda^{(2)}\right) \frac{h}{2}+\Sigma ^{(2)}\left(W\left(t_{n+1}\right)-W\left(t_{n}+\frac{h}{2}\right)\right)\right\}.
	\end{align}
	Since $Y_n$, $Y_{n+1}$, $\Gamma^{(2)}$ and $\Lambda^{(2)}$ are all positive, we can deduce
	\begin{align}\label{eq:31}
		\widetilde{X}_{n}\leqslant X_{n} * \exp \left\{\eta^{(2)}  \frac{h}{2}+\Sigma ^{(2)}\left(W\left(t_{n}+\frac{h}{2}\right)-W\left(t_{n}\right)\right)\right\}.
	\end{align}
	and
	\begin{align}\label{eq:48}
		{X}_{n+1}\leqslant \widetilde{X}_{n} * \exp \left\{\eta^{(2)}  \frac{h}{2}+\Sigma ^{(2)}\left(W\left(t_{n+1}\right)-W\left(t_{n}+\frac{h}{2}\right)\right)\right\}.
	\end{align}
	Substituting \eqref{eq:31} into \eqref{eq:48}, we have
	\begin{align*}
		X_{n+1}
		\leqslant X_{n}  * \exp \left\{\eta^{(2)}  h+\Sigma ^{(2)}\left(W\left(t_{n+1}\right)-W\left(t_{n}\right)\right)\right\},
	\end{align*}
	which yields
	\begin{align*}
		X_{n+1}
		\leqslant X_{0}  * \exp \left\{\eta^{(2)}  T+\Sigma ^{(2)}W\left(t_{n+1}\right)\right\}
		\overset{\triangle}= X_{0}  * C_1* \exp \left\{\Sigma ^{(2)}W\left(t_{n+1}\right)\right\}.
	\end{align*}
	Note that, in view of $|U*V|{\le}|U||V|$ for any $U,~V\in \mathbb{R}^d$, we know that
	\begin{equation}{\label{eq:32}}
		\begin{split}
			\mathbb{E}\left|X_{n+1}\right|^{p}  
			& \le\left|X_{0}\right|^{p} \left|C_1\right|^{p} \mathbb{E}\left[\left|\exp \left( \Sigma ^{(2)} W\left(t_{n+1}\right)\right)\right|^{p}\right]\\
			&=\left|X_{0}\right|^{p} \left|C_1\right|^{p} \mathbb{E}\left[ \sum_{i=1}^{d}\left(\exp \left(\sum_{j=1}^{m} \sigma_{i j}^{(2)} W_{j}\left(t_{n+1}\right)\right)\right)^2\right]^{\frac{p}{2}}
		\end{split}
	\end{equation}
	It follows from \cite[Theorem 3.3]{Hong} that the inequality \eqref{eq:32} can be bounded as
	\begin{equation}\label{eq:33}
		\begin{split}
			\mathbb{E}\left|X_{n+1}\right|^{p}  
			&	\lesssim \sum_{i=1}^{d}\left|X_{0}\right|^{p} \left|C_1\right|^{p} \mathbb{E}\left[\exp \left(p \sum_{j=1}^{m} \sigma_{i j}^{(2)} W_{j}\left(t_{n+1}\right)\right)\right]\\
			& = \sum_{i=1}^{d}\prod_{j=1}^{m}\left|X_{0}\right|^{p} \left|C_1\right|^{p} \mathbb{E}\left[\exp \left(p  \sigma_{i j}^{(2)} W_{j}\left(t_{n+1}\right)\right)\right].
		\end{split}
	\end{equation}
	Since
	\begin{align}\label{eq:34}
		\mathbb{E} \big[ \exp \left(C W_j\left(t\right)\right) \big]=\exp\left(\frac{1}{2} C^{2}t\right),{\quad}j=1,2,\cdots,m,
	\end{align}
	by plugging $C=p\sigma_{i j}^{(2)}$ into (\ref{eq:34}), we derive from (\ref{eq:33}) that
	\begin{align}\notag
		\mathbb{E}\left|X_{n+1}\right|^{p}  	 \le\sum_{i=1}^{d} \prod_{j=1}^{m}\left|X_{0}\right|^{p} \left|C_1\right|^{p}\exp \left(\frac{1}{2} p^{2}\left(\sigma_{i j}^{(2)}\right)^{2} t_{n+1}\right),
	\end{align}
	which implies
	\begin{align}\notag
		\mathbb{E}\left|X_{n+1}\right|^{p}  	\le\sum_{i=1}^{d} \prod_{j=1}^{m}\left|X_{0}\right|^{p} \left|C_1\right|^{p}\exp \left(\frac{1}{2} p^{2}\left(\sigma_{i j}^{(2)}\right)^{2} T\right) \overset{\triangle}=C.
	\end{align}
	This proves the the uniform boundedness of \( \mathbb{E}\big[\left|X_{n}\right|^{p}\big] \).
	
	Now we are in the position to prove the the uniform boundedness of \( \mathbb{E}\big[\left|Y_{n}\right|^{p}\big] \). When the coefficient matrix $\Sigma^{(2)}\equiv0$, for any $n=0, 1, \cdots, N - 1$, due to the stochastic Strang splitting method \eqref{eq:6}, we get
	\begin{align*}
		Y_{n+1}  =Y_{n} * \exp \left\{\left(\Gamma^{(1)} \widetilde{X}_{n}-\eta^{(1)}-\frac{1}{2} \Lambda^{(1)}\right) h+\Sigma^{(1)}\left(W\left(t_{n+1}\right)-W\left(t_{n}\right)\right)\right\}.
	\end{align*}
	Notice that $\eta^{(1)}$ and $\Lambda^{(1)}$ are all positive, then
	\begin{align}\label{eq:36}
		Y_{n+1}\leq Y_{n} * \exp  \left\{\Gamma^{(1)} \widetilde{X}_{n} h+\Sigma^{(1)}\left(W\left(t_{n+1}\right)-W\left(t_{n}\right)\right)\right\} .
	\end{align}
	Substituting \eqref{eq:31} into \eqref{eq:36}, it holds
	\begin{equation}\label{eq:37}
		\begin{split}
			Y_{n+1}&\leq Y_{n} * \exp \left\{\Gamma^{(1)} hX_{n} * \exp \left(\eta^{(2)} \frac{h}{2}\right) +\Sigma^{(1)}\left(W\left(t_{n+1}\right)-W\left(t_{n}\right)\right)\right\}\\
			&\leq Y_{n} * \exp \left\{\Gamma^{(1)}h X_{0} * \exp \left(\eta^{(2)} T+\eta^{(2)} \frac{h}{2}\right)+\Sigma^{(1)}\left(W\left(t_{n+1}\right)-W\left(t_{n}\right)\right)\right\}\\
			&\leq Y_{0} * \exp \left\{\Gamma^{(1)}T X_{0} * \exp \left(\eta^{(2)} T+\eta^{(2)} \frac{h}{2}\right)+\Sigma^{(1)}W\left(t_{n+1}\right)\right\}\\\notag
			&\overset{\triangle}= Y_{0} * C_{2} * \exp \left\{\Sigma^{(1)}W\left(t_{n+1}\right)\right\}.{\notag}
		\end{split}
	\end{equation}
	Therefore, we have
	\begin{align}
		\mathbb{E}\big[\left|Y_{n+1}\right|^{p} \big] \le\left|Y_{0}\right|^{p}\left|C_{2}\right|^{p} \mathbb{E}\left[\left|\exp \left\{\Sigma^{(1)}W\left(t_{n+1}\right)\right\}\right|^{p}\right].
	\end{align}
	Analogous to the proof of $\mathbb{E}\big[\left|X_{n}\right|^{p}\big]$, we obtain
	\begin{align}{\notag}
		\mathbb{E}\big[\left|Y_{n+1}\right|^{p}\big] \leqslant \sum_{i=1}^{d} \prod_{j=1}^{m}\left|Y_{0}\right|^{p}\left|C_{2}\right|^{p} \exp \left(\frac{1}{2} p^{2}\left(\sigma_{i j}^{(1)}\right)^{2} T\right) \overset{\triangle}=C.{\notag}
	\end{align}
	Consequently, this establishes the uniform boundedness of \( \mathbb{E}\big[\left|Y_{n}\right|^{p}\big] \).
\end{proof}
For the stochastic Lie--Trotter splitting method \eqref{eq:5}, we can derive the following uniform boundedness of the numerical solution. Proof is similar to that of Theorem \ref{T4}.
\begin{theorem}\label{T7}
	Given any deterministic initial value $(X_0^\top,~ Y_0^\top)^\top \in \mathbb{R}_{+}^{2d}$, for any  $p\geq1$, the numerical solution $X_n\in \mathbb{R}_{+}^{d}$ of the stochastic Lie-Trotter splitting method \eqref{eq:5} is uniformly bounded
	$$\underset{n=1,\cdots,N}{\sup} \mathbb{E}\big[|X_n|^p\big]{\le}  C .$$
	Moreover, when $\Sigma^{(2)}\equiv0$, the $p$-th moment of $Y_n\in \mathbb{R}_{+}^{d}$ is also uniformly bounded
	$$	\underset{n=1,\cdots,N}{\sup} \mathbb{E}\big[|Y_n|^p\big]{\le}  C,$$
	where the positive constant $C=C(X_0, Y_0,\Gamma^{(1)},\Gamma^{(2)},\eta^{(1)},\eta^{(2)},\Sigma^{(1)},\Sigma^{(2)})$.
\end{theorem}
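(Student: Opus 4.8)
The plan is to reproduce, with the obvious simplifications, the telescoping-plus-moment argument used for Theorem \ref{T4}, now with the two-flow composition $\varphi_h^1\circ\varphi_h^2$ of \eqref{eq:5} in place of the three-flow composition of \eqref{eq:6}. First I would establish the bound on $X_n$. Since $Y_n$, $\Gamma^{(2)}$ and $\Lambda^{(2)}$ have only positive entries, the first line of \eqref{eq:5} yields the componentwise inequality
\[
X_{n+1}\le X_n\ast\exp\!\left\{\eta^{(2)}h+\Sigma^{(2)}\big(W(t_{n+1})-W(t_n)\big)\right\}.
\]
Iterating this from $n$ down to $0$ and using that the Brownian increments telescope gives $X_{n+1}\le X_0\ast C_1\ast\exp\{\Sigma^{(2)}W(t_{n+1})\}$ with $C_1:=\exp\{\eta^{(2)}T\}$. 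Then, exactly as in \eqref{eq:32}--\eqref{eq:34} — using $|U\ast V|\le|U|\,|V|$, the inequality of \cite[Theorem 3.3]{Hong} to pass from $(\sum_i a_i^2)^{p/2}$ to $\sum_i|a_i|^p$, and the identity $\mathbb{E}[\exp(CW_j(t))]=\exp(\tfrac12 C^2 t)$ with $C=p\sigma_{ij}^{(2)}$ — I would conclude $\sup_{1\le n\le N}\mathbb{E}[|X_n|^p]\le C$.

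Next I would treat $Y_n$ under the hypothesis $\Sigma^{(2)}\equiv0$. The key observation is that in this case the bound in the previous step becomes deterministic: the first line of \eqref{eq:5} now gives $X_{n+1}\le X_n\ast\exp\{\eta^{(2)}h\}$, hence $X_{n+1}\le X_0\ast\exp\{\eta^{(2)}T\}$. Substituting this into the second line of \eqref{eq:5} and discarding the negative terms $-\eta^{(1)}-\tfrac12\Lambda^{(1)}$ gives
\[
Y_{n+1}\le Y_n\ast\exp\!\left\{\Gamma^{(1)}h\big(X_0\ast\exp(\eta^{(2)}T)\big)+\Sigma^{(1)}\big(W(t_{n+1})-W(t_n)\big)\right\},
\]
and telescoping as before produces $Y_{n+1}\le Y_0\ast C_2\ast\exp\{\Sigma^{(1)}W(t_{n+1})\}$ with $C_2:=\exp\{\Gamma^{(1)}T(X_0\ast\exp(\eta^{(2)}T))\}$. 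Taking $p$-th moments and repeating the computation of the first step verbatim, now with $\sigma_{ij}^{(1)}$ in place of $\sigma_{ij}^{(2)}$, would yield $\sup_{1\le n\le N}\mathbb{E}[|Y_n|^p]\le C$.

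The argument is structurally shorter than the one for the Strang scheme, since there is no intermediate stage $\widetilde X_n$ to carry along and the $Y$-update already uses the updated value $X_{n+1}$. The one place where care is genuinely needed is the estimate for $Y_n$: the factor $\exp\{\Gamma^{(1)}hX_{n+1}\}$ is an exponential of $X_{n+1}$, so if $\Sigma^{(2)}$ were not identically zero it would be an exponential of $\exp\{\Sigma^{(2)}W\}$, whose moments are infinite; this is exactly why the hypothesis $\Sigma^{(2)}\equiv0$ is imposed for the second bound and cannot be dropped by this method. I expect no other obstacles — the remaining steps are the routine moment estimates already carried out in the proof of Theorem \ref{T4}.
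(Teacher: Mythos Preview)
Your proposal is correct and matches the paper's approach exactly: the paper states only that the proof is similar to that of Theorem~\ref{T4}, and your adaptation of the telescoping-plus-moment argument to the two-flow composition \eqref{eq:5}, with the simplification that no intermediate stage $\widetilde X_n$ appears, is precisely what is intended. Your remark on why the hypothesis $\Sigma^{(2)}\equiv0$ is essential for the $Y_n$ bound is also on point and mirrors the structure of the Strang case.
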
 
\begin{remark}
	It follows from the procedure of the proofs of the aforementioned theorems, we can observe that when $\Sigma^{(2)} \not\equiv 0$, the $p$-th moment of $X_n$ is uniformly bounded. However, it cannot be guaranteed that the $p$-th moment of $Y_n$ is also uniformly bounded.
\end{remark}

Therefore, we can derive the uniform boundedness of the solutions $(X_n^\top,~Y_n^\top)^\top \in \mathbb{R}_{+}^{2d}$ of \eqref{eq:5} and \eqref{eq:6}.
\begin{theorem}\label{T6}
	Assume that $\Sigma^{(2)}\equiv0$. Given any deterministic initial value $(X_0^\top,~ Y_0^\top)^\top \in \mathbb{R}_{+}^{2d}$, for any  $p\geq1$, the numerical solutions  $(X_n^\top,~Y_n^\top)^\top \in \mathbb{R}_{+}^{2d}$ of the stochastic Lie--Trotter splitting method \eqref{eq:5} and the stochastic Strang splitting method \eqref{eq:6} are both uniformly bounded
	$$	\underset{n=1,\cdots,N}{\sup} \mathbb{E}\left[|X_n|^p+|Y_n|^p\right]{\le}  C,$$
	where the positive constant $C=C(X_0, Y_0,\Gamma^{(1)},\Gamma^{(2)},\eta^{(1)},\eta^{(2)},\Sigma^{(1)},\Sigma^{(2)})$.
\end{theorem}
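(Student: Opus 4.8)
The statement of Theorem \ref{T6} is an immediate consequence of the two preceding results, so the proof is essentially a bookkeeping exercise rather than a new argument. The plan is to invoke Theorem \ref{T4} and Theorem \ref{T7} directly: under the standing assumption $\Sigma^{(2)}\equiv 0$, each of these theorems already supplies, for every $p\ge 1$, a uniform-in-$n$ bound $\sup_{n=1,\cdots,N}\mathbb{E}[|X_n|^p]\le C$ and $\sup_{n=1,\cdots,N}\mathbb{E}[|Y_n|^p]\le C$ for the Strang method \eqref{eq:6} and the Lie--Trotter method \eqref{eq:5} respectively, with the constant depending only on the data $(X_0,Y_0,\Gamma^{(1)},\Gamma^{(2)},\eta^{(1)},\eta^{(2)},\Sigma^{(1)},\Sigma^{(2)})$ and on $p$ but not on $N$ (equivalently not on $h$).

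First I would fix $p\ge 1$ and the splitting method (treating \eqref{eq:5} and \eqref{eq:6} in turn, or simply noting the argument is identical). Applying Theorem \ref{T4} (resp.\ Theorem \ref{T7}) gives constants $C_X=C_X(\cdot,p)$ and $C_Y=C_Y(\cdot,p)$ with $\mathbb{E}[|X_n|^p]\le C_X$ and $\mathbb{E}[|Y_n|^p]\le C_Y$ for all $n\in\{1,\cdots,N\}$. Then by linearity of expectation,
\begin{equation}\notag
	\mathbb{E}\big[|X_n|^p+|Y_n|^p\big]=\mathbb{E}\big[|X_n|^p\big]+\mathbb{E}\big[|Y_n|^p\big]\le C_X+C_Y=:C,
\end{equation}
and taking the supremum over $n=1,\cdots,N$ on the left does not disturb the bound since the right-hand side is independent of $n$. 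The constant $C$ depends only on the data and on $p$, as claimed. Since the same reasoning applies verbatim to both \eqref{eq:5} and \eqref{eq:6}, this completes the proof.

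There is essentially no obstacle here: the only point worth a moment's care is confirming that the constants furnished by Theorems \ref{T4} and \ref{T7} are genuinely uniform in $n$ (and hence in the step size $h=T/N$), which is exactly the form in which those theorems are stated. One could alternatively phrase the combination via the elementary inequality $|X_n|^p+|Y_n|^p\le 2^{p}\big(|X_n|+|Y_n|\big)^p$ and $(|X_n|+|Y_n|)^p\le 2^{p-1}(|X_n|^p+|Y_n|^p)$ if one prefers to work with the sum $|X_n|+|Y_n|$, but this is not needed; additivity of expectation suffices.
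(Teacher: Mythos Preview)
Your proposal is correct and matches the paper's own treatment: the paper presents Theorem~\ref{T6} as an immediate consequence of Theorems~\ref{T4} and~\ref{T7} without a separate proof, and your argument---invoking those two results and adding the bounds via linearity of expectation---is exactly the intended deduction.
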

\subsection{Strong convergence order}\label{sec:convergence order}
This section is concerned with the convergence analysis of two stochastic positivity-preserving symplectic methods proposed in the previous subsection.

For the stochastic LV model \eqref{eq:1}, we denote by $Z_S|_{t_0,Z_0}(t_k)$ the approximate value at time $t_k$ which obtained after $k$ iterations of stochastic splitting methods on the interval $[t_0, T]$, starting from the initial condition $Z_0 = Z(t_0)$. $Z|_{t_0,Z_0}(t_k)$ represents the exact solution at time $t_k$. 

The following fundamental theorem on the mean-square order of convergence is very useful in applications; see for instance \cite[Theorem 1.1]{M}.
\begin{theorem}\label{T5}
	Suppose the one-step approximation $Z_S|_{t_0,Z_0}(t_k)$ has order of accuracy $p_1$ for expectation of the deviation and order of accuracy $p_2$ for the meansquare deviation; more precisely, for arbitrary to  $t_0 \leq t \leq t_0 + T - h$, the following inequalities hold:
	\begin{align*}
		\left|\mathbb{E}\left[Z|_{t,Z_n}(t+h)-Z_S|_{t,Z_n}(t+h)\right]\right| & \leq K\left(1+|Z_n|^{2}\right)^{1 / 2} h^{p_{1}},
	\end{align*}
    \begin{align*}
		{\left[\mathbb{E}\left|Z|_{t,Z_n}(t+h)-Z_S|_{t,Z_n}(t+h)\right|^2\right]}^{1/2}   \leq K\left(1+|Z_n|^{2}\right)^{1 / 2} h^{p_{2}}.
	\end{align*}
	Also let
	$$p_{2} {\ge}\frac{1}{2},{\quad}p_{1} {\ge} p_{2}+\frac{1}{2} .$$
	Then for any $N$ and $k = 0, 1, \cdots, N$, the following inequality holds
	\[ \left[ \mathbb{E}  \left| Z|_{t_0,Z_0}(t_k) - Z_S|_{t_0,Z_0}(t_k) \right|^2 \right]^{1/2} \leq K \left( 1 + \mathbb{E}|Z_0|^2 \right)^{1/2} h^{p_2 - 1/2}. \]
	i.e., the order of accuracy of the method constructed using the one-step approximation $Z_S|_{t,Z_n}(t+h)$ is $p = p_2 - 1/2$.
\end{theorem}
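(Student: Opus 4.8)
The plan is to run the classical one-step error-accumulation argument of Milstein. Write $\bar Z_k:=Z_S|_{t_0,Z_0}(t_k)$ for the numerical iterates, $Z(t_k):=Z|_{t_0,Z_0}(t_k)$ for the exact values, $\mathcal{E}_k:=Z(t_k)-\bar Z_k$ for the global error, and let $\mathcal{F}_t$ be the natural filtration of $W$, so that $\bar Z_k$ and $Z(t_k)$ are $\mathcal{F}_{t_k}$-measurable. Using the flow (semigroup) property of the exact solution, $Z|_{t_0,Z_0}(t_{k+1})=Z|_{t_k,\,Z(t_k)}(t_{k+1})$, together with the one-step structure $\bar Z_{k+1}=Z_S|_{t_k,\bar Z_k}(t_{k+1})$, one gets the exact identity
\[
\mathcal{E}_{k+1}=\mathcal{E}_k+A_{k+1}+B_{k+1},
\]
where $A_{k+1}:=\big(Z|_{t_k,Z(t_k)}(t_{k+1})-Z(t_k)\big)-\big(Z|_{t_k,\bar Z_k}(t_{k+1})-\bar Z_k\big)$ is the difference of the one-step increments of the exact flow started from the two nearby points $Z(t_k)$ and $\bar Z_k$, and $B_{k+1}:=Z|_{t_k,\bar Z_k}(t_{k+1})-Z_S|_{t_k,\bar Z_k}(t_{k+1})$ is the local one-step error. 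Applying the two hypotheses conditionally on $\mathcal{F}_{t_k}$ gives $\big|\mathbb{E}[B_{k+1}\mid\mathcal{F}_{t_k}]\big|\le K(1+|\bar Z_k|^2)^{1/2}h^{p_1}$ and $\mathbb{E}\big[|B_{k+1}|^2\mid\mathcal{F}_{t_k}\big]\le K^2(1+|\bar Z_k|^2)h^{2p_2}$.

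Next I would record two perturbation estimates for the exact flow over a single step: $\big|\mathbb{E}[A_{k+1}\mid\mathcal{F}_{t_k}]\big|\le C\,h\,|\mathcal{E}_k|$ and $\mathbb{E}\big[|A_{k+1}|^2\mid\mathcal{F}_{t_k}\big]\le C\,h\,|\mathcal{E}_k|^2$. Both follow from writing the difference of the two exact solutions via It\^o's formula, using the Lipschitz property of the drift and diffusion coefficients, the It\^o isometry and Cauchy--Schwarz, and a one-step Gr\"onwall bound; the point is that the drift contribution to $A_{k+1}$ is $\mathcal{O}(h)$ relative to $|\mathcal{E}_k|$ while the stochastic-integral contribution has zero conditional mean, which is exactly why the conditional-mean bound on $A_{k+1}$ gains a full power of $h$ whereas the conditional mean-square bound only gains one power of $h$.

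Then square the recursion and take expectations,
\[
\mathbb{E}|\mathcal{E}_{k+1}|^2=\mathbb{E}|\mathcal{E}_k|^2+2\,\mathbb{E}\langle\mathcal{E}_k,A_{k+1}+B_{k+1}\rangle+\mathbb{E}|A_{k+1}+B_{k+1}|^2 .
\]
For the cross term I would condition on $\mathcal{F}_{t_k}$ so that $\mathcal{E}_k$ factors out, bound $\big|\mathbb{E}\langle\mathcal{E}_k,\mathbb{E}[A_{k+1}+B_{k+1}\mid\mathcal{F}_{t_k}]\rangle\big|$ by $\mathbb{E}\big[|\mathcal{E}_k|\big(Ch|\mathcal{E}_k|+K(1+|\bar Z_k|^2)^{1/2}h^{p_1}\big)\big]$, and then use Young's inequality; for the square term use $\mathbb{E}|A_{k+1}+B_{k+1}|^2\le 2\mathbb{E}|A_{k+1}|^2+2\mathbb{E}|B_{k+1}|^2$ with the conditional mean-square estimates. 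The growth factors $(1+|\bar Z_k|^2)$ are absorbed through the uniform moment bound on the one-step numerical solution that accompanies \cite[Theorem 1.1]{M}, which turns the constants into ones proportional to $1+\mathbb{E}|Z_0|^2$, and yields a recursion of the form
\[
\mathbb{E}|\mathcal{E}_{k+1}|^2\le(1+Ch)\,\mathbb{E}|\mathcal{E}_k|^2+C\big(h^{2p_1-1}+h^{2p_2}\big).
\]
Here the hypothesis $p_1\ge p_2+\tfrac12$ is used: it gives $2p_1-1\ge 2p_2$, so for $h\le1$ the inhomogeneous term is $\le Ch^{2p_2}$, while $p_2\ge\tfrac12$ makes $2p_2-1\ge0$ so the outcome is nondegenerate. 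Since $\mathcal{E}_0=0$, the discrete Gr\"onwall inequality over the $N=T/h$ steps gives $\mathbb{E}|\mathcal{E}_k|^2\le Ch^{2p_2}\cdot\frac{(1+Ch)^N-1}{Ch}\le C(e^{CT}-1)\,h^{2p_2-1}$, hence $\big(\mathbb{E}|\mathcal{E}_k|^2\big)^{1/2}\le K(1+\mathbb{E}|Z_0|^2)^{1/2}h^{p_2-1/2}$, which is the assertion with $p=p_2-1/2$.

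The main obstacle is the bookkeeping in the cross term: one must \emph{not} estimate $\langle\mathcal{E}_k,B_{k+1}\rangle$ crudely by $|\mathcal{E}_k|\,|B_{k+1}|$ (that would contribute $\mathcal{O}(h^{p_2})$ per step and hence the wrong global order), but instead condition on $\mathcal{F}_{t_k}$ so that only the $\mathcal{O}(h^{p_1})$ conditional mean of $B_{k+1}$ feeds into the ``drift-like'' part of the error accumulation, while the $\mathcal{O}(h^{p_2})$ fluctuations of $B_{k+1}$ and $A_{k+1}$ enter only through the sum of squares, where $N$ terms of size $h^{2p_2}$ add up to $h^{2p_2-1}$ rather than compounding. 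Getting these exponents to line up, and carrying the $(1+|\bar Z_k|^2)$ growth factors through with only the a priori moment bound at hand, is where the real care is needed; the remainder is routine SDE estimation together with discrete Gr\"onwall.
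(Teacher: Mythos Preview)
The paper does not prove this theorem: it is simply quoted from \cite[Theorem 1.1]{M} (Milstein--Tretyakov) as a black box and then applied in Theorem~3.9. Your outline is precisely the classical Milstein error-accumulation argument from that reference---decompose the global error via the flow property into the local one-step error $B_{k+1}$ plus the exact-flow perturbation $A_{k+1}$, condition on $\mathcal{F}_{t_k}$ so that only the $\mathcal{O}(h^{p_1})$ conditional mean of $B_{k+1}$ enters the cross term, and close with discrete Gr\"onwall---so it matches what the cited source does. One small caveat: your perturbation estimates for $A_{k+1}$ invoke global Lipschitz continuity of the coefficients, which is an explicit hypothesis in Milstein's original theorem but is suppressed in the paper's restatement; since the stochastic LV model has only locally Lipschitz (indeed super-linear) coefficients, the paper's later application of this theorem leans on the uniform moment bounds of Theorem~\ref{T6} rather than on the classical hypotheses, so strictly speaking the theorem as stated here is being used somewhat formally.
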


Next, through Theorem \ref{T5}, the relationship between the local error order and the global error order of the stochastic Strang splitting method \eqref{eq:6} in the $L^2(\Omega)$-norm is established.
\begin{theorem}\label{thm3.9}
	Let conditions in Theorem \ref{T6} hold. The stochastic positivity-preserving Strang symplectic splitting method \eqref{eq:6} converges with global order one in the $L^2(\Omega)$-norm.
\end{theorem}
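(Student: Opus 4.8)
The plan is to invoke the fundamental mean-square convergence theorem, Theorem \ref{T5}, with orders $p_1=2$ for the expectation of the one-step deviation and $p_2=3/2$ for its mean square; since then $p_2\ge\tfrac12$ and $p_1\ge p_2+\tfrac12$, Theorem \ref{T5} yields global accuracy $p=p_2-\tfrac12=1$ in the $L^2(\Omega)$-norm. Writing $\delta_n:=Z|_{t_n,Z_n}(t_{n+1})-Z_S|_{t_n,Z_n}(t_{n+1})$ for the one-step error of \eqref{eq:6} started from $Z_n=(X_n^\top,Y_n^\top)^\top$, it therefore suffices to prove
\[
\big|\mathbb{E}[\delta_n]\big|\le K\big(1+|Z_n|^{2}\big)^{1/2}h^{2},
\qquad
\big(\mathbb{E}|\delta_n|^{2}\big)^{1/2}\le K\big(1+|Z_n|^{2}\big)^{1/2}h^{3/2}.
\]
Because the hypotheses of Theorem \ref{T6} are assumed, we have $\Sigma^{(2)}\equiv0$, hence $\Lambda^{(2)}=0$ and the $X$-moving flow $\varphi_h^2$ of \eqref{eq2222} is a deterministic ODE flow; the noise enters \eqref{eq:1} only through the $Y$-component and, via the coupling, feeds back into $X$.

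The first step is to expand both one-step maps on $[t_n,t_{n+1}]$ about $Z_n$. For the exact flow of \eqref{eq:1} I would use the It\^{o}--Taylor expansion, keeping all iterated integrals of order at most one — that is $h$, $\Delta W:=W(t_{n+1})-W(t_n)$, the quadratic increments $\Delta W\ast\Delta W$, and the mixed integrals $\int_{t_n}^{t_{n+1}}(s-t_n)\,dW(s)$ and $\int_{t_n}^{t_{n+1}}\!\big(W(s)-W(t_n)\big)\,ds$ — with an $L^2$ remainder bounded by $C\big(1+|Z_n|^{q}\big)\,h^{3/2}$ for a suitable power $q$ ($q>2$ in general, reflecting the super-linearity of the drift and diffusion of \eqref{eq:1}). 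For the numerical map I would Taylor-expand the three exponential factors of \eqref{eq:6} in powers of $h$ and of the sub-increments $\Delta_1 W:=W(t_n+h/2)-W(t_n)$ and $\Delta_2 W:=W(t_{n+1})-W(t_n+h/2)$, then substitute the expansions of $\widetilde X_n$ and $Y_{n+1}$ into that of $X_{n+1}$ and regroup by stochastic order using $\Delta_1 W+\Delta_2 W=\Delta W$. Matching the two expansions, all contributions of order $\le 1$ agree by construction of the splitting; the leading mismatch is of order $h^{3/2}$ and comes from the non-commutativity of the $X$-flow \eqref{eq2222} and the $Y$-flow \eqref{eq1111}: in the exact solution the $X$-drift integrates the continuously fluctuating $Y(s)$, producing $\int_{t_n}^{t_{n+1}}(W(s)-W(t_n))\,ds$, whereas the Strang map only sees the endpoint fluctuations, $\tfrac{h}{2}\cdot 0+\tfrac{h}{2}\cdot\Delta W$. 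This gives the claimed $h^{3/2}$ bound for $(\mathbb{E}|\delta_n|^2)^{1/2}$. For the expectation, the palindromic structure of the composition \eqref{eq:17} renders the $\Delta W$-independent part of $\delta_n$ of order $h^{2}$ (indeed $O(h^{3})$, as for the deterministic Strang scheme), while every surviving order-$h^{3/2}$ term is a mean-zero iterated stochastic integral (equivalently an odd polynomial in the Gaussian increments), so it vanishes under $\mathbb{E}$; since no order-$h^{3/2}$ deterministic scale appears in either expansion, this cannot be spoiled, and $\mathbb{E}[\delta_n]=O(h^{2})$.

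The main obstacle is not the coefficient matching but the passage from these local estimates to the global one, because the ``constants'' appearing above are in fact polynomials in $X_n,Y_n$ and in the intermediate quantities $\widetilde X_n,\,Y_{n+1}$ (and the corresponding exact intermediate values), so the literal form $K(1+|Z_n|^{2})^{1/2}$ demanded by Theorem \ref{T5} is too tight for the super-linear system \eqref{eq:1}. I would resolve this exactly as in the moment estimates of Theorem \ref{T4}: bound $\widetilde X_n$, $Y_{n+1}$ and their exact analogues entrywise by $X_n$ or $Y_n$ times exponentials of Wiener increments independent of $\mathcal{F}_{t_n}$, so that their conditional moments given $\mathcal{F}_{t_n}$ are finite and controlled by $C(1+|Z_n|^{q})$, and then use the uniform-in-$n$ moment bounds of the numerical solution (Theorem \ref{T6}) together with those of the exact solution (Lemma \ref{p2}). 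With these in hand, the telescoping argument underlying Theorem \ref{T5} — carried out with the polynomial weights absorbed by the uniform moment bounds, in the manner standard for schemes applied to equations with super-linearly growing coefficients (cf. \cite{M}) — yields
\[
\Big[\mathbb{E}\big|Z|_{t_0,Z_0}(t_k)-Z_S|_{t_0,Z_0}(t_k)\big|^{2}\Big]^{1/2}\le K\big(1+\mathbb{E}|Z_0|^{2}\big)^{1/2}\,h,\qquad k=0,1,\dots,N,
\]
which is precisely the asserted global order one in the $L^2(\Omega)$-norm.
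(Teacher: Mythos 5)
Your proposal is correct and follows essentially the same route as the paper: establish the local orders $p_1=2$ for the mean deviation and $p_2=3/2$ for the mean-square deviation of the one-step Strang map, invoke Theorem \ref{T5} to get global order $p_2-\tfrac12=1$, and use the uniform moment bounds of the exact and numerical solutions (Lemma \ref{p2}, Theorem \ref{T6}) to control the state-dependent constants. The only difference is in execution: rather than full It\^{o}--Taylor expansions and term-by-term matching, the paper subtracts the integral representations of the exact solution and of the splitting flows directly --- since each splitting piece is an exact flow (and $\Sigma^{(2)}\equiv0$), the stochastic integrals cancel and the one-step error reduces to double time integrals plus a mean-zero stochastic integral in the $X$-component, bounded by $Ch^{2}$ in mean and $Ch^{3}$ in mean square, exactly the orders you identify via the non-commutativity argument.
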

\begin{proof}
	Since the proof of the mathematical expectation accuracy for the one-step approximation deviation starting from any arbitrary time is similar to that starting from the initial value $Z_0$, we focus specifically on the proof of the accuracy starting from the initial value $Z_0$. 
	
	Let the numerical solution of the one-step approximation at $t = h$ be denoted as $Z_1 = (X_1^\top, Y_1^\top)^{\top}$ and let the exact solution at $t = h$ be denoted as $Z(h) = \left(X^{\top}(h), Y^{\top}(h)\right)^{\top}$. 
	
	First, the local error order of the one-step approximation for the quantity $Y_1$ is determined. The expressions for $Y(h)$ and $Y_1$ are given in the following integral forms
	\begin{equation}\label{eq:7}
		Y(h)=Y_{0}+\int_{0}^{h} Y(t)* \left(\Gamma ^{(1)} X(t)-\eta^{(1)}-\frac{1}{2} \Lambda^{(1)}\right) d t+\int_{0}^{h} \Sigma^{(1)} Y(t) d W(t) ,
	\end{equation} 
	and
	\begin{equation}\label{eq:8}
		Y_{1}=Y_{0}+\int_{0}^{h} Y(t)*\left(\Gamma ^{(1)} \widetilde{X}_{0}-\eta^{(1)}-\frac{1}{2} \Lambda^{(1)} \right)d t+\int_{0}^{h} \Sigma^{(1)} Y(t) d W(t).	
	\end{equation} 
	The difference between the exact solution $Y(h)$ and the numerical solution $Y_{1}$ is denoted as
	\begin{equation}\label{eq:9}
		Y(h)-Y_{1}=\Gamma ^{(1)} \int_{0}^{h} Y(t)*\left(X(t)-\widetilde{X}_{0}\right) dt,
	\end{equation}
	where
	\begin{equation}\label{eq:10}
		X(t)=X_{0}+\int_{0}^{t} X(s)*\left(-\Gamma^{(2)} Y{(s)}+\eta^{(2)}\right) ds,
	\end{equation}
	\begin{equation}\label{eq:11}
		\widetilde{X}_{0}=X_{0}+\int_{0}^{\frac{h}{2}} X(s)*\left(-\Gamma^{(2)} Y_{0}+\eta^{(2)}\right) ds .
	\end{equation}
	By substituting (\ref{eq:10}) and (\ref{eq:11}) into (\ref{eq:9}), it is evident that the deviation of $Y_1$ can be represented by a double integral,
	\begin{align*}
		&\left| \mathbb{E}\left[Y(h)-Y_1\right]\right|\\[2mm]
		& =\left|\Gamma ^{(1)} \int_{0}^{h} \mathbb{E} \left[ Y(t)*\left(\int_{0}^{t} X(s)*\left(-\Gamma^{(2)} Y{(s)}+\eta^{(2)}\right) d s-\int_{0}^{\frac{h}{2}} X(s)*\left(-\Gamma^{(2)} Y_{0}+\eta^{(2)}\right) d s\right) \right]d t\right|.
	\end{align*}
	For any matrix $A\in\mathbb{R}^{d\times d}$ and vector $U\in\mathbb{R}^{d}$, we know that $\left|AU\right|\le|A|_F|U|$, where $|\cdot|_F$ is Frobenius norm of a matrix. Furthermore, by using the uniform boundedness of $X(t)$, $Y(t)$ given by Lemma \ref{p2}, we can obtain
	\begin{align*}
		\left| \mathbb{E}\left[Y(h)-Y_1\right]\right|\le &\left|\Gamma ^{(1)}\right|_F\int_{0}^{h}\int_{0}^{t}\mathbb{E}\left[\left|Y(t)\right||\left|X(s)\right|\left|-\Gamma ^{(2)}Y(s)+\eta^{(2)}\right|\right]dsdt\\[2mm]
		& +\left|\Gamma ^{(1)}\right|_F\int_{0}^{h}\int_{0}^{\frac{h}{2}}\mathbb{E}\left[\left|Y(t)\right|\left|X(s)\right|\left|-\Gamma ^{(2)}Y_{0}+\eta^{(2)}\right|\right]dsdt\\[2mm]
		{\leq}& Ch^{2}.
	\end{align*}
	
	To assess the accuracy of the one-step approximation mean square deviation for the component $Y_1$, the Cauchy-Schwarz inequality is applied to the mean square deviation, thereby 
	\begin{equation}\label{eq:39}
		\begin{split}
			& \mathbb{E}\left|Y(h)-Y_1\right|^2 \\[2mm]
			& \leq 2h\left|\Gamma ^{(1)}\right|_F^{2}\int_{0}^{h}\mathbb{E}\left|\int_{0}^{t}Y(t)*X(s)*(-\Gamma^{(2)}Y(s)+\eta^{(2)})ds\right|^{2}dt\\
			& \quad{+}2h\left|\Gamma ^{(1)}\right|_F^{2}\int_{0}^{h}\mathbb{E}\left|\int_{0}^{\frac{h}{2} }Y(t)*X(s)*(-\Gamma^{(2)}Y_{0}+\eta^{(2)})ds\right|^{2}dt .
		\end{split}
	\end{equation}   
	Using the Cauchy-Schwarz inequality once more on (\ref{eq:39}) and utilizing the uniform boundedness of $X(t)$, $Y(t)$ given by Lemma \ref{p2} , it follows that
	\begin{align*}
		\mathbb{E}\left|Y(h)-Y_1\right|^2 & \leq 2h\left|\Gamma ^{(1)}\right|_F^{2}\int_{0}^{h}t\int_{0}^{t}\mathbb{E}\left[\left||Y(t)||X(s)||-\Gamma^{(2)}Y(s)+\eta^{(2)}|\right|\right]^{2}dsdt \\[2mm]
		& \quad{+}h^{2} \left|\Gamma ^{(1)}\right|_F^{2}\int_{0}^{h}\int_{0}^{\frac{h}{2} }\mathbb{E}\left[\left||Y(t)||X(s)||-\Gamma^{(2)}Y_{0}+\eta^{(2)}|\right|\right]^2dsdt \\[2mm]
		& \leq Ch^3 .
	\end{align*}   
	In conclusion, according to the fundamental theorem on the mean-square order of convergence stated in Theorem \ref{T5}, the global mean square error order of $Y_n$ is one, that is
	$$ \left[\mathbb{E}\left|Y(t_n)-Y_n\right|^{2}\right]^{\frac{1}{2}}\leq Ch.$$
	
	To determine the local error order of the first-order approximation for the numerical solution $X_1$, the expressions for $X(h)$ and $X_1$ are formulated in the following integral forms
	\begin{align}\label{eq:45}
		X(h)=X_{0}+\int_{0}^{h}X(t)*(-\Gamma ^{(2)}Y(t)+\eta ^{(2)})dt,
	\end{align}
	and
	\begin{align}\label{eq:40}
		X_1 =\widetilde{X}_{0}+\int_{\frac{h}{2}}^{h}X(t)*(-\Gamma^{(2)}Y_1+\eta^{(2)})dt.
	\end{align}
	Inserting (\ref{eq:10}) into (\ref{eq:40}), we derive
	\begin{align}\label{eq:41}
		X_1=X_{0}+\int_{0}^{\frac{h}{2}}X(t)*(-\Gamma ^{(2)}Y_0+\eta^{(2)})dt+\int_{\frac{h}{2}}^{h}X(t)*(-\Gamma ^{(2)}Y_1+\eta^{(2)})dt.
	\end{align}
	Subtracting \eqref{eq:45} from \eqref{eq:41}, we arrive at
	\begin{align}\label{eq:46}
		\big|\mathbb{E}\left[X(h)-X_1\right]\big|  =\left|\mathbb{E}\left[\int_{0}^{\frac{h}{2}}\Gamma ^{(2)}X(t)*(Y_{0}-Y(t))dt+\int_{\frac{h}{2}}^{h}\Gamma^{(2)}X(t)*(Y_1-Y(t))dt\right] \right|,
	\end{align}
	where
	\begin{equation}\label{eq:47}
		Y(t)=Y_{0}+\int_{0}^{t} Y(s)*  \left(\Gamma ^{(1)} X(s)-\eta^{(1)}-\frac{1}{2} \Lambda^{(1)}\right) d s+\int_{0}^{t} \Sigma^{(1)} Y(s) d W(s) .
	\end{equation} 
	By substituting (\ref{eq:8}) and (\ref{eq:47}) into (\ref{eq:46}), (\ref{eq:46}) is equivalent to
	\begin{align*}
		&\left|\mathbb{E}\left[X(h)-X_1\right]\right| \\[2mm]
		& = \left|\mathbb{E}\left[\int_{0}^{h} \int_{0}^{t}\Gamma^{(2)}X(t)*Y(s)*\left(\left(\Gamma^{(1)}X(s)-\eta ^{(1)}-\frac{1}{2}\Lambda^{(1)}\right)ds+\Sigma ^{(1)}dW(s)\right)dt\right.\right. \\[2mm]
		& \quad+\left.\left.\int_{{\frac{h}{2}}}^h \int_{0}^{h}\Gamma^{(2)}X(t)*Y(s)*\left(\left(\Gamma^{(1)}\widetilde{X}_{0}-\eta ^{(1)}-\frac{1}{2}\Lambda^{(1)}\right)ds+\Sigma ^{(1)}dW(s)\right)dt\right]\right|.
	\end{align*}
	Similarly, thanks to the fact that $|AU|\le|A|_F|U|$, we have
	\begin{align*}
		&\left|\mathbb{E}\left[X(h)-X_1\right]\right| \\[2mm]
		& \le \left|\Gamma ^{(2)}\right|_F\left|\mathbb{E}\left[\int_{0}^{h} \int_{0}^{t}X(t)*Y(s)*\left(\left(\Gamma^{(1)}X(s)-\eta ^{(1)}-\frac{1}{2}\Lambda^{(1)}\right)ds+\Sigma ^{(1)}dW(s)\right)dt\right.\right. \\[2mm]
		& \quad+\left.\left.\int_{{\frac{h}{2}}}^h \int_{0}^{h}X(t)*Y(s)*\left(\left(\Gamma^{(1)}\widetilde{X}_{0}-\eta ^{(1)}-\frac{1}{2}\Lambda^{(1)}\right)ds+\Sigma ^{(1)}dW(s)\right)dt\right]\right|\\
		&\leq\left|\Gamma^{(2)}\right|_F\int_{0}^{h}\int_{0}^{t}\mathbb{E}\left[|X(t)||Y(s)|\left|\Gamma^{(1)}X(s)-\eta^{(1)}-\frac{1}{2}\Lambda^{(1)}\right|\right]dsdt \\[2mm]
		&\quad +\left|\Gamma^{(2)}\right|_F\int_{\frac{h}{2}}^{h}\int_{0}^{h}\mathbb{E}\left[\left|X(t)\right|\left|Y(s)\right|\left|\Gamma^{(1)}\widetilde{X}_{0}-\eta^{(1)}-\frac{1}{2}\Lambda^{(1)}\right|\right]dsdt \\[2mm]
		& \leq Ch^{2},
	\end{align*}
	due to
	$$\mathbb{E}\left[\int_{0}^{t} CdW_j(s)\right]=0,\quad j=1,2,\cdots,m.$$ 
	When evaluating the mean square deviation of $X_1$, we know
	\begin{align}{\notag}
		& \mathbb{E}\mid X(h)-X_1\mid^{2} \\{\notag}
		& \le \left|\Gamma ^{(2)}\right|_F^2\mathbb{E}\left|\int_{0}^{h} \int_{0}^{t}X(t)*Y(s)*\left(\left(\Gamma^{(1)}X(s)-\eta ^{(1)}-\frac{1}{2}\Lambda^{(1)}\right)ds+\Sigma ^{(1)}dW(s)\right)dt\right. \\{\notag}
		&\quad +\left.\int_{{\frac{h}{2}}}^h \int_{0}^{h}X(t)*Y(s)*\left(\left(\Gamma^{(1)}\widetilde{X}_{0}-\eta ^{(1)}-\frac{1}{2}\Lambda^{(1)}\right)ds+\Sigma ^{(1)}dW(s)\right)dt\right|^2 .
	\end{align}
	The Cauchy--Schwarz inequality is initially applied to bound the mean square deviation, then
	\begin{align*}
		& \mathbb{E}\mid X(h)-X_1\mid^{2} \\[2mm]
		& \leq 4h\left|\Gamma ^{(2)}\right|_F^2\int_{0}^{h}t\int_{0}^{t}\mathbb{E}\left[|X(t)||Y(s)|\left|\Gamma^{(1)}X(s)-\eta^{(1)}-\frac{1}{2}\Lambda^{(1)}\right|\right]^2dsdt \\
		& \quad+4h\left|\Gamma ^{(2)}\right|_F^2\int_{0}^{h}\mathbb{E}\left|\int_{0}^{t}X(t)*Y(s)*\Sigma ^{(1)}dW(s)\right|^{2}dt \\{\notag}
		& \quad+2h\left|\Gamma ^{(2)}\right|_F^2\int_{\frac{h}{2}}^{h}t\int_{0}^{h}\mathbb{E}\left[X(t)||Y(s)|\left|\Gamma^{(1)}\widetilde{X}_{0}-\eta^{(1)}-\frac{1}{2}\Lambda^{(1)}\right|\right]^2dsdt \\
		&\quad +2h\left|\Gamma ^{(2)}\right|_F^2\int_{\frac{h}{2}}^{h}\mathbb{E}\left|\int_{0}^{h}X(t)*Y(s)*\Sigma ^{(1)}dW(s)\right|^{2}dt \\
		& \leq Ch^{3}.
	\end{align*}
	According to Theorem \ref{T5} again, the global mean square error order of $X_n$ is one, that is
	$$ \left[\mathbb{E}\left|X(t_n)-X_n\right|^{2}\right]^{\frac{1}{2}}\leq Ch.$$
	Therefore, for all $t_n = nh \in [0, T]$, it holds that
	\begin{align}{\notag}
		\left[ \mathbb{E}\left(\left|X(t_n)-X_n\right|^2+\left|Y(t_n)-Y_n\right|^2\right)\right]^{\frac{1}{2}}\leq Ch.{\notag}
	\end{align}
	The proof of the theorem is thus completed.
\end{proof}
Similar to Theorem \ref{thm3.9}, we have the following theorem.
\begin{theorem}
	Let conditions in Theorem \ref{T6} hold. The stochastic positivity-preserving Lie--Trotter symplectic splitting method \eqref{eq:5} converges with global order one in the $L^2(\Omega)$-norm.
\end{theorem}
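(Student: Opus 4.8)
The plan is to reproduce, for the two-operator composition \eqref{eq:5}, the strategy used in the proof of Theorem~\ref{thm3.9}: reduce the assertion to checking that one step of \eqref{eq:5} has local order $p_1=2$ for the expectation of the deviation and $p_2=3/2$ for the mean-square deviation, and then invoke the fundamental mean-square convergence theorem, Theorem~\ref{T5}. Fix a deterministic datum $Z_0=(X_0^\top,Y_0^\top)^\top$, let $Z_1=(X_1^\top,Y_1^\top)^\top$ be the result of one step of \eqref{eq:5} and $Z(h)=(X^\top(h),Y^\top(h))^\top$ the exact solution at time $h$; since the one-step estimates started from an arbitrary node $t_n$ with datum $Z_n$ are obtained verbatim, it suffices to establish
\begin{align*}
	\big|\mathbb{E}[Z(h)-Z_1]\big|\le C\big(1+|Z_0|^2\big)^{1/2}h^{2},\qquad
	\big(\mathbb{E}|Z(h)-Z_1|^2\big)^{1/2}\le C\big(1+|Z_0|^2\big)^{1/2}h^{3/2}.
\end{align*}
With $p_1=2$, $p_2=3/2$ one has $p_2\ge\tfrac12$ and $p_1\ge p_2+\tfrac12$, so Theorem~\ref{T5} gives global order $p_2-\tfrac12=1$ in the $L^2(\Omega)$-norm. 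Throughout, the uniform moment bounds for the exact solution (Lemma~\ref{p2}) and for the numerical solution (Theorem~\ref{T6}, applicable because the hypotheses of Theorem~\ref{T6} include $\Sigma^{(2)}\equiv 0$) are used to absorb trajectory-dependent factors into~$C$.

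For the $X$-component, \eqref{eq1111}--\eqref{eq2222} show that one step of \eqref{eq:5} advances $X$ by $\varphi_h^2$ with $Y$ frozen at $Y_0$, so
\begin{equation*}
	X(h)-X_1=\Gamma^{(2)}\int_0^h X(t)\ast\big(Y_0-Y(t)\big)\,dt+R_h,
\end{equation*}
where $R_h$ collects the difference between the exact flow and the sub-flow $\varphi_t^2$, is a time integral of a factor proportional to $X(t)-\widehat X(t)=O(t^{3/2})$ in $L^2(\Omega)$, and hence $\|R_h\|_{L^2(\Omega)}=O(h^{5/2})$. Inserting the integral form $Y(t)-Y_0=\int_0^t Y(s)\ast(\Gamma^{(1)}X(s)-\eta^{(1)}-\tfrac12\Lambda^{(1)})\,ds+\int_0^t\Sigma^{(1)}Y(s)\,dW(s)$ turns the main term into an iterated integral; taking expectations, the It\^o integral contributes only at higher order (its leading part has zero mean since $X_0$ is deterministic), leaving $|\mathbb{E}[X(h)-X_1]|\le Ch^2$, while Cauchy--Schwarz together with the It\^o isometry and the moment bounds give $\mathbb{E}|X(h)-X_1|^2\le Ch^3$. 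This is the exact analogue of the $X_1$-estimate in Theorem~\ref{thm3.9}.

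For the $Y$-component, \eqref{eq:5} advances $Y$ by $\varphi_h^1$ with $X$ frozen at the already updated value $X_1$, whence
\begin{equation*}
	Y(h)-Y_1=\Gamma^{(1)}\int_0^h Y(t)\ast\big(X(t)-X_1\big)\,dt+\widetilde R_h,
\end{equation*}
with $\widetilde R_h$ governed by the difference between the exact $Y$-trajectory and the sub-flow $\varphi_t^1(X_1,Y_0)$, which solves a linear SDE with multiplicative noise $\Sigma^{(1)}$ and source proportional to $Y(s)\ast(X_1-X(s))$; a Gronwall/Burkholder estimate and the moment bounds give $\|\widetilde R_h\|_{L^2(\Omega)}=O(h^{5/2})$. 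For the main term write $X(t)-X_1=(X(t)-X(h))+(X(h)-X_1)$: the first bracket equals $-\int_t^h X(s)\ast(-\Gamma^{(2)}Y(s)+\eta^{(2)})\,ds$ and contributes a double integral over a triangle of area $O(h^2)$, while the second is $O(h^{3/2})$ in $L^2(\Omega)$ by the previous paragraph and contributes $O(h^{5/2})$. Hence $|\mathbb{E}[Y(h)-Y_1]|\le Ch^2$ and $\mathbb{E}|Y(h)-Y_1|^2\le Ch^3$; combining the two components yields $p_1=2$, $p_2=3/2$ for $Z_1$, and Theorem~\ref{T5} completes the proof.

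The step I expect to be the main obstacle is the $Y$-component bookkeeping: one must track carefully which flow --- $\varphi_t^1$, $\varphi_t^2$, or the exact flow --- appears in each integral representation of the one-step map, and, crucially, show that the multiplicative noise $\Sigma^{(1)}$ propagates the $O(h^{3/2})$ discrepancy between $X(h)$ and $X_1$ into $Y$ without lowering the order. Once the uniform moment bounds of Theorem~\ref{T6} are in hand this is routine, which is why the Lie--Trotter method attains the same global order one as the Strang method.
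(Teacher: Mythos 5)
Your proposal is correct and follows essentially the same route as the paper: the paper proves the Strang case (Theorem \ref{thm3.9}) via integral representations of the one-step error, the moment bounds of Lemma \ref{p2} and Theorem \ref{T6}, and the fundamental mean-square convergence theorem (Theorem \ref{T5}) with local orders $p_1=2$, $p_2=3/2$, and then asserts the Lie--Trotter case by the analogous argument, which is exactly what you carry out. Your explicit remainders $R_h$ and $\widetilde R_h$ (comparing the exact flow with the frozen sub-flows and estimating them by Gronwall/It\^o-isometry arguments) amount to slightly more careful bookkeeping than the paper's own representation, which writes the one-step map with the exact trajectory inside the integrals, but this does not constitute a different approach.
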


\section{Numerical experiments}\label{sec:numerical}
In this section, the effectiveness of the stochastic Lie--Trotter splitting method and the stochastic Strang splitting method are validated through two numerical examples. To mitigate the potential impact of pseudo-random numbers on the numerical solutions, 1000 sample trajectories are employed in the computations. Since the exact solution is unknown, the solution obtained using the stochastic Strang splitting method with a step size of $h = 2^{-12}$ is used as the reference solution. The one order accuracy of the overall error between the approximate exact solution and the numerical solutions in the $L^2(\Omega)$-norm is then verified. Additionally, the phase error is computed to confirm that the numerical method preserves the discrete stochastic symplectic conservation law.
\subsection{Two-dimensional case}
In this example, the scenario involves a single prey species and a single predator species in the presence of one-dimensional noise. The parameters are  $\Gamma^{(1)} = \Gamma^{(2)} = 3$, $\eta^{(1)} = 5$, $\eta^{(2)} = 1$, $\Sigma^{(1)} = 1$ and $\Sigma^{(2)} = 0$ respectively. The initial values are $x(0)=1$ and $y(0)=7$. Under this setting, the stochastic LV model \eqref{eq:1} turns to be
\begin{equation*}
	\begin{cases}
		d x(t)=x(t)(-3 y(t)+1) d t, \\[2mm]
		d y(t)=y(t)[(3 x(t)-5) d t+dW(t)],\\[2mm]
		x(0)=1,~ y(0)=7.
	\end{cases}
\end{equation*}
Figure \ref{Fig1} (a) and (b) illustrates the trajectories of the numerical solutions $x(t)$ and $y(t)$ obtained using the two splitting methods. Compared with the reference solution over the time interval $t \in [0, 1]$.

\begin{figure}
	\begin{center}	\subfigure[]{
			\begin{minipage}[t]{0.45\linewidth}
				\includegraphics[width=1\textwidth]{./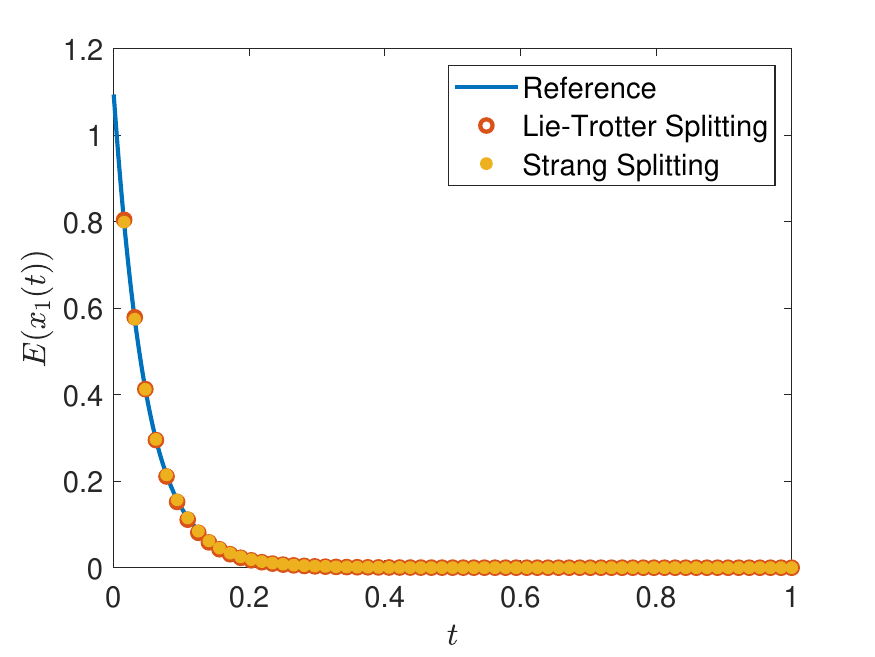}
			\end{minipage}
		}
		\subfigure[]{
			\begin{minipage}[t]{0.45\linewidth}
				\includegraphics[width=1\textwidth]{./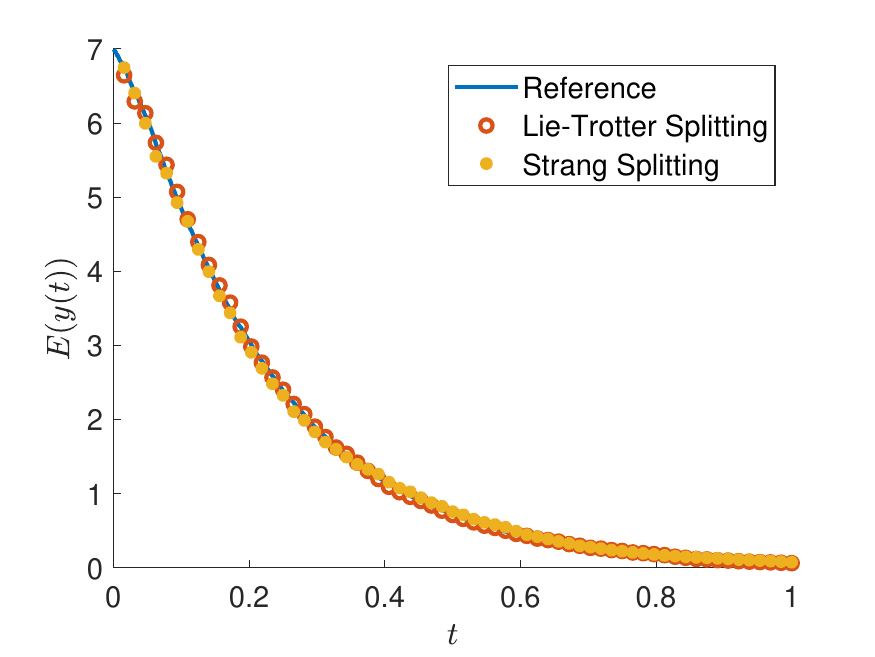}
			\end{minipage}
		}
	\end{center}
	\caption{
		(a) Schematic diagram of the trajectories for the numerical solution of $x(t)$ obtained by two stochastic splitting methods and the reference solution. (b) Schematic diagram of the trajectories for the numerical solution of $y(t)$ obtained by two stochastic splitting methods and the reference solution.
		\label{Fig1}}
\end{figure}

From Figure \ref{Fig1} (a) and (b), it can be observed that the numerical trajectories obtained using two stochastic splitting methods are closely distributed around the reference solution, indicating that the numerical solutions accurately approximate the reference solution.

To validate the conclusion from the previous section that the global error order of stochastic splitting methods are one, we set the final time $T = 1$ and consider six different step sizes $h = 2^{-i}$, $i = 4, 5, 6, 7, 8, 9$. The error is then plotted on a log-log scale to verify this result.
\begin{figure}
	\begin{center}
		\includegraphics[width=10cm]{./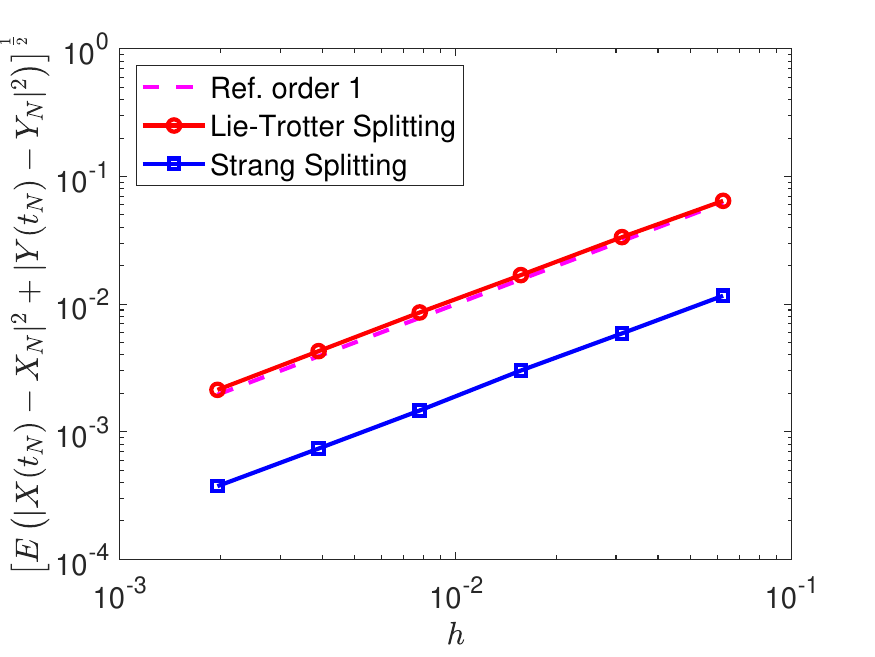}
	\end{center}
	\caption{
		The log-log error plot of two stochastic splitting methods for two-dimensional stochastic LV model in the $L^2(\Omega)$-norm.
		\label{Fig2}}
\end{figure}

From Figure \ref{Fig2}, it can be observed that the numerical results of both the stochastic Lie-Trotter splitting method and the stochastic Strang splitting method are parallel to the reference line with a convergence order of 1. This observation verifies the conclusion in Theorem \ref{thm3.9} that the global error order is one. 

The aforementioned numerical results effectively demonstrate the approximation accuracy of the two stochastic splitting methods to the reference solution. However, it fails to provide evidence regarding whether these methods can preserve the stochastic symplectic structure. To evaluate the performance of these two stochastic symplectic-preserving methods, we select $P_0^1=(1,7)$, $P_0^2=(2,1)$ and $P_0^3=(5,3)$. Within the time interval $[0, 10]$, the solutions $P_n^1=(x_n^1,y_n^1)$, $P_n^2=(x_n^2,y_n^2)$ and $P_n^3=(x_n^3,y_n^3)$ of the three points at time $t_n$ are obtained through the stochastic Lie--Trotter splitting method, the stochastic Strang splitting method and the EM method. Among them, Figure \ref{Fig3} (a) represents the phase space area $S_n$ of the triangle enclosed by these three points and
\begin{equation}{\notag}
	S_n =\frac{1}{2} 
	\begin{vmatrix}
		x_n^1 & y_n^1 & 1 \\
		x_n^2 & y_n^2 & 1 \\
		x_n^3 & y_n^3 & 1
	\end{vmatrix}.
\end{equation}
These results are compared with the triangle area $S_{R}$ of the reference solution within the time interval $[0, 10]$. The ordinate of Figure \ref{Fig3} (b) represents the difference in the phase space areas of the triangles enclosed by the three points of the three methods respectively and those of the reference solution, that is $\left|S_n-S_{R}\right|$.
\begin{figure}
	\begin{center}
		\subfigure[]{
			\begin{minipage}[t]{0.45\linewidth}
				\includegraphics[width=1\textwidth]{./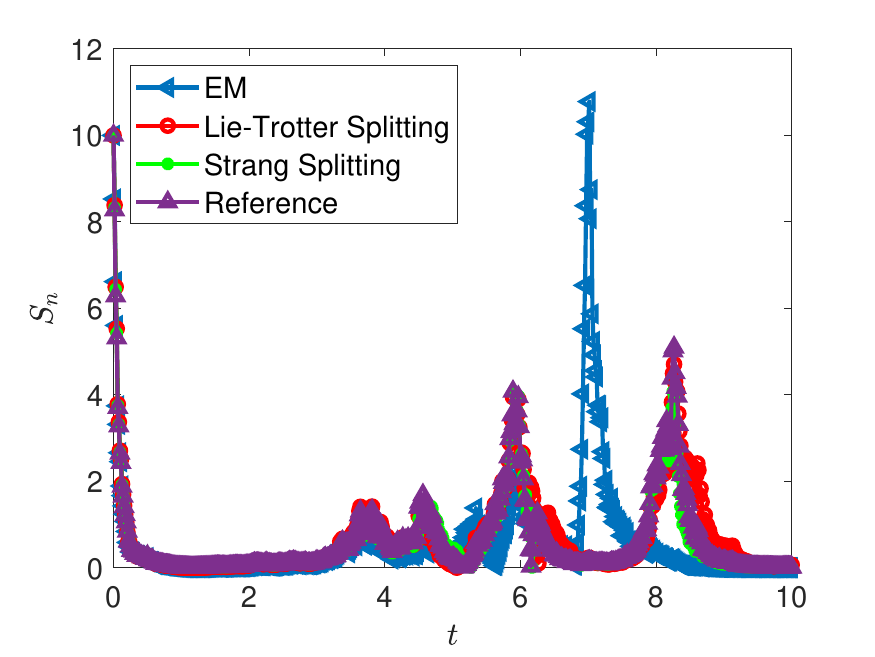}
			\end{minipage}
		}
		\subfigure[]{
			\begin{minipage}[t]{0.45\linewidth}
				\includegraphics[width=1\textwidth]{./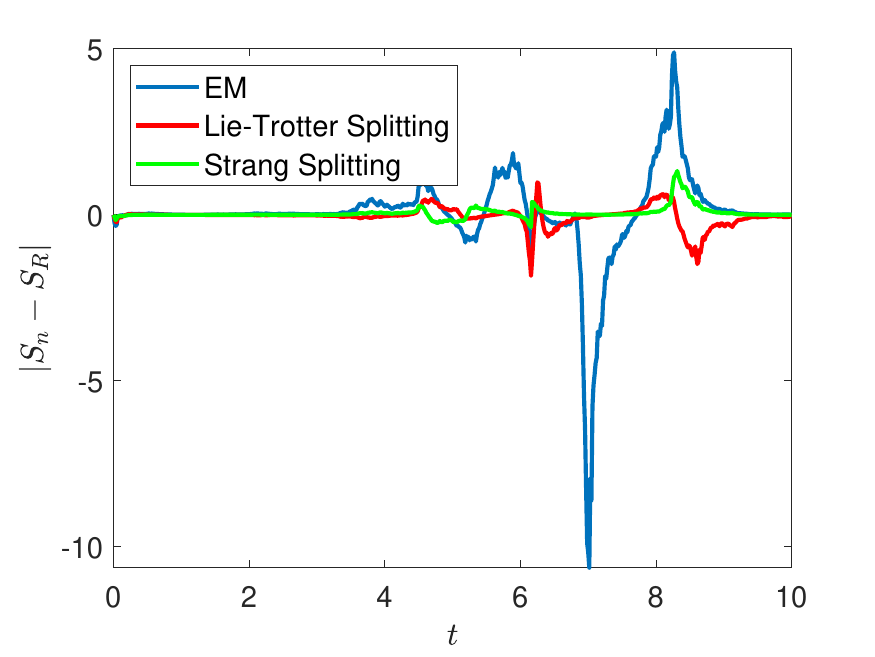}
			\end{minipage}
		}
	\end{center}
	\caption{
		A comparison chart of phase area between stochastic splitting methods and the EM method for exact solutions. (a) Comparison of the phase areas. (b) Comparison of the error of the phase space areas.
		\label{Fig3}}
\end{figure}
The results in Figure \ref{Fig3} (a) indicate that the phase areas of stochastic Lie--Trotter splitting method and stochastic Strang splitting method remain close to the reference solution's phase area over the time interval $[0, 10]$. In contrast, the EM numerical method exhibits significant deviation from the reference solution's phase area within the time interval $[6, 10]$. To more clearly illustrate the degree of deviation from the reference solution's phase area, the phase differences between each method and the reference solution are plotted. As shown in Figure \ref{Fig3} (b), the phase differences for the two stochastic symplectic-preserving splitting methods remain largely within a small region around $0$, while those for the non-symplectic-preserving EM method are notably larger. This observation highlights the superior performance of stochastic symplectic-preserving splitting methods.

\subsection{Four-dimensional case}
In this section, a scenario involving two prey species and two predator species in the presence of three-dimensional noise is considered. The selected parameters are as follows
$$\Gamma^{(1)}=\begin{bmatrix}{3}&{0}\\{0}&{5}\end{bmatrix},\quad \Gamma^{(2)}=\begin{bmatrix}{7}&{0}\\{0}&{4}\end{bmatrix},$$
$\eta^{(1)}=(1,4)^{\top}, \eta^{(2)}=(1,2)^{\top}, \Sigma^{(1)}=(0.4,0.5,0.6)$ and $\Sigma^{(2)}=(0,0,0)$. Under this setting, the stochastic LV model \eqref{eq:1} turns to be
\begin{equation*}\notag
	\begin{cases}
		d x_{1}(t)=x_{1}(t)\left(-7 y_{1}(t)+1\right) d t ,\\[2mm]\notag
		d x_{2}(t)=x_{2}(t)(-4 y_{2}(t)+2) d t , \\[2mm]\notag
		d y_{1}(t)=y_{1}(t)\left[(3 x_{1}(t)-1) d t+0.4 d W_{1}+0.5 d W_{2}+0.6 d W_{3}\right], \\[2mm]\notag
		d y_{2}(t)=y_{2}(t)\left[\left(5 x_{2}(t)-4\right) d t+0.4 d W_{1}+0.5 d W_{2}+0 . 6 d W_{3} \right],\\[2mm]\notag
		(x_1(0),x_2(0))=(1.1,5.2),~(y_1(0),y_2(0))=(3,7.1).
	\end{cases}
\end{equation*}
Figure \ref{FFig1} illustrates the trajectories of the numerical solutions $X(t) = (x_1(t), x_2(t))^{\top}$ and $Y(t) = (y_1(t), y_2(t))^{\top}$ obtained using two stochastic splitting methods, along with the reference solution, over the time interval $t \in [0, 1]$.
\begin{figure}
	\begin{center}
		\subfigure[]{
			\begin{minipage}[t]{0.45\linewidth}
				\includegraphics[width=1\textwidth]{./Figure1/x1-path.pdf}
			\end{minipage}
		}
		\subfigure[]{
			\begin{minipage}[t]{0.45\linewidth}
				\includegraphics[width=1\textwidth]{./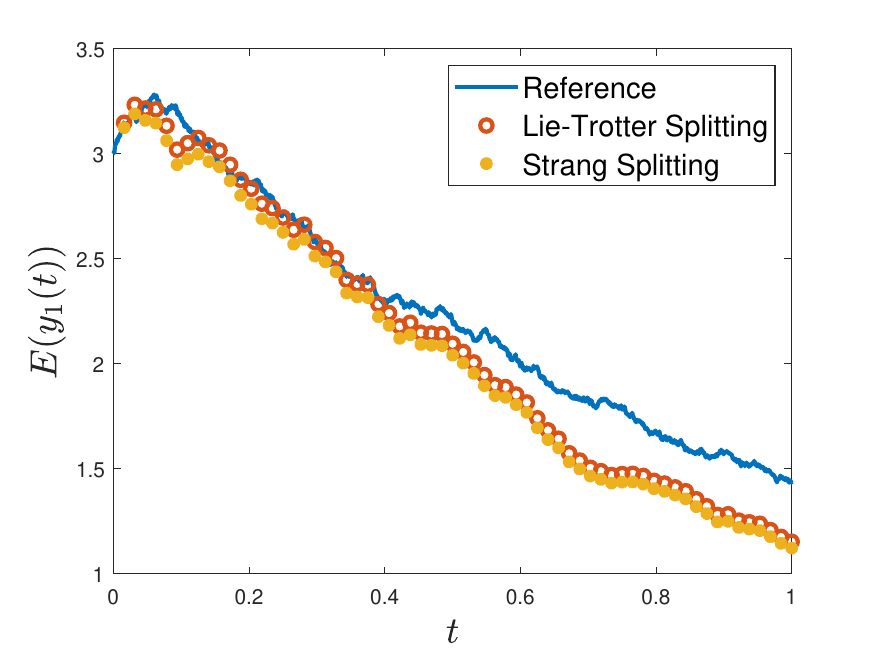}
			\end{minipage}
		}
		
		\subfigure[]{
			\begin{minipage}[t]{0.45\linewidth}
				\includegraphics[width=1\textwidth]{./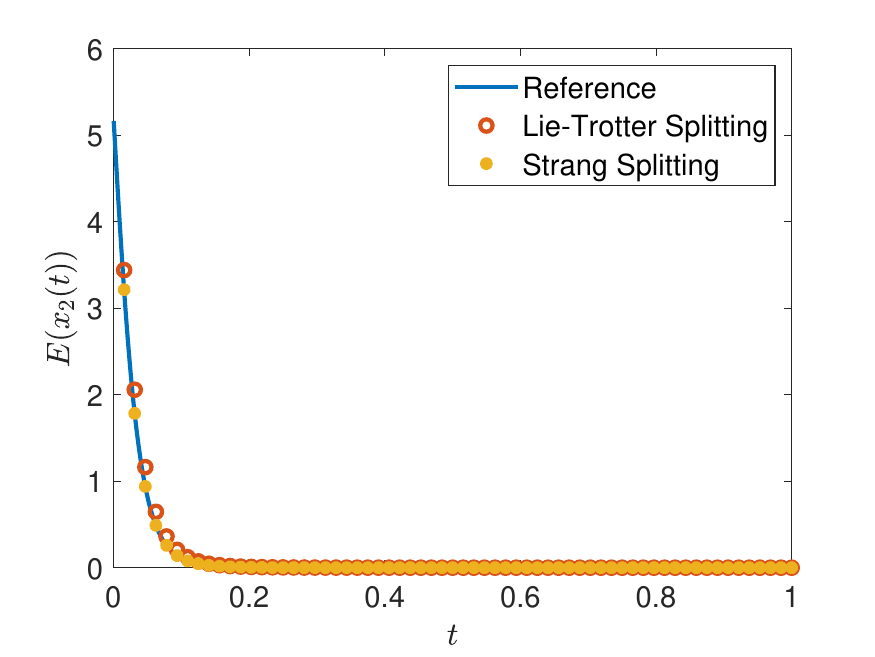}
			\end{minipage}
		}
		\subfigure[]{
			\begin{minipage}[t]{0.45\linewidth}
				\includegraphics[width=1\textwidth]{./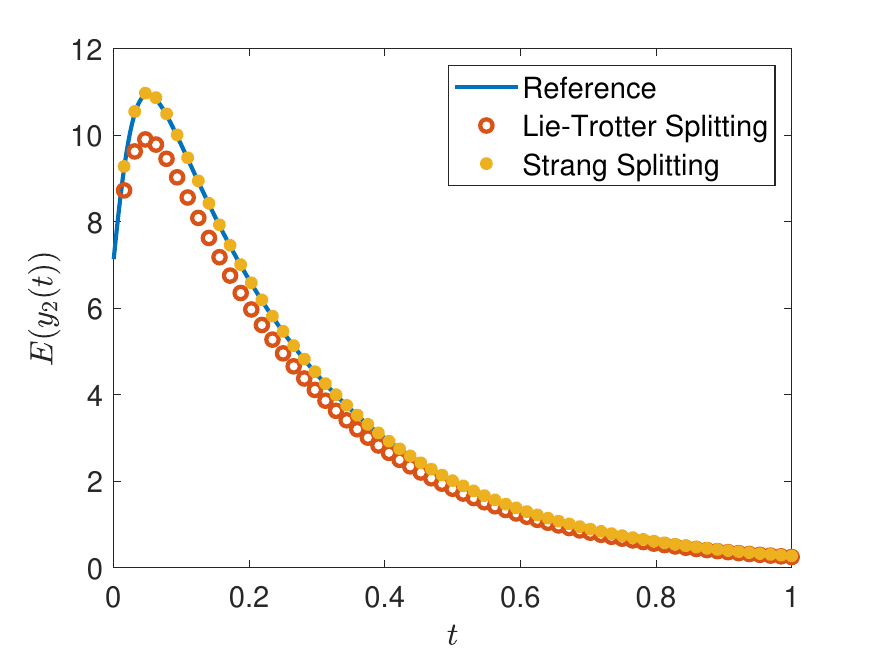}
			\end{minipage}
		}
	\end{center}
	\caption{
		(a) Schematic diagram of the trajectories for the numerical solution of $x_1(t)$ obtained by two stochastic splitting methods and the reference solution. (b) Schematic diagram of the trajectories for the numerical solution of $y_1(t)$ obtained by two stochastic splitting methods and the reference solution. (c) Schematic diagram of the trajectories for the numerical solution of $x_2(t)$ obtained by two stochastic splitting methods and the reference solution. (d) Schematic diagram of the trajectories for the numerical solution of $y_2(t)$ obtained by two stochastic splitting methods and the reference solution.
		\label{FFig1}}
\end{figure}
From Figure \ref{FFig1}, it can be observed that the numerical trajectories obtained using the two stochastic splitting methods are closely distributed around the reference solution, indicating that the numerical solutions accurately approximate the reference solution.
To further investigate the global error order of stochastic splitting methods, the time point $T = 1$ is considered and six different step sizes $h = 2^{-i}$ are selected for $i = 4, 5, 6, 7, 8, 9$. The errors are then plotted on a log-log scale to analyze the convergence behavior.
\begin{figure}
	\begin{center}
		\includegraphics[width=10cm]{./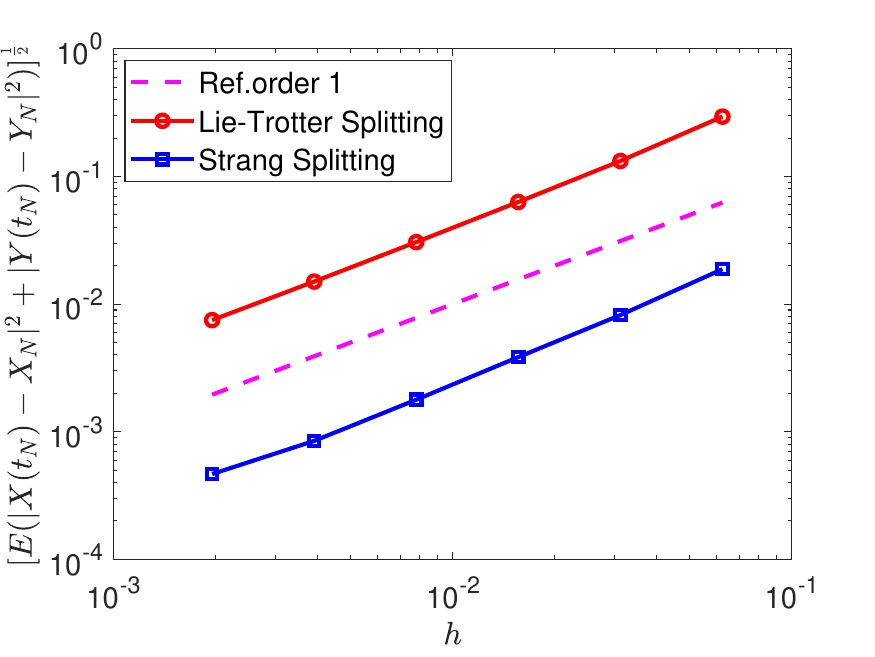}
	\end{center}
	\caption{
		The log-log error plot of two stochastic splitting methods for four-dimensional stochatic LV model in the $L^2(\Omega)$-norm.
		\label{FFig2}}
\end{figure}

The log-log plots of the two splitting methods in Figure \ref{FFig2} are parallel to the reference line with a convergence order of 1, thereby validating the global error order of 1 established in Section \ref{sec:positivity}.

\section{Conclusions}\label{sec:conclusion}
This paper investigates a class of numerical methods for the stochastic LV model. The findings reveal that under specific conditions, these methods exhibit a positivity-preserving symplectic structure and uniform boundedness. Furthermore, it is rigorously proven that the stochastic splitting methods converges with global order one in the $L^2(\Omega)$-norm. The conclusions are validated through comprehensive numerical examples in both two-dimensional and four-dimensional examples. However, stochastic splitting methods for solving the stochastic LV model have certain limitations. Notably, the conditions required to ensure one order convergence are stringent. Future research should explore whether more relaxed conditions can guarantee uniform boundedness and first-order convergence. Additionally, for more complex models, such as stochastic LV predator-prey models with noise components beyond white noise, it is crucial to investigate whether the system can maintain positivity and symplecticity. Developing numerical methods that preserve these properties in such scenarios would be of significant value.

\section*{Acknowledgments}
The authors would like to express their appreciation to the referees for their useful comments and the editors. Liying Zhang is supported by the National Natural Science Foundation of China (No. 11601514 and No. 11971458), the Fundamental Research Funds for the Central Universities (No. 2023ZKPYL02 and No. 2023JCCXLX01) and the Yueqi Youth Scholar Research Funds for the China University of Mining and Technology-Beijing (No. 2020YQLX03), 2025 Basic Sciences Initiative in Mathematics and Physics. Lihai Ji is supported by the National Natural Science Foundation of China (No. 12171047).



\begin{thebibliography}{99}
   \bibitem{A1} A. J. Lotka, Undamped oscillations derived from the law of mass action, J. Am. Chem. Soc., 42 (1920), 1595--1599.
	
	\bibitem{A2} A. Bahar and X. R. Mao, Stochastic delay population dynamics, Int. J. Pure Appl. Math., 11 (2004), 377--399.
	
	\bibitem{F1} F. H. Shekarabi and M. Khodabin, Numerical solutions of stochastic Lotka--Volterra equations via operational matrices, J. Interpolat. Approx. Sci. Comput., 55 (2016), 37--42.
	
	\bibitem{G1} G. P. Samanta, Influence of environmental noises on the Gomatam model of interacting species, Ecol. Model., 91 (1996), 283--291.
	
	\bibitem{M} G. N. Milstein and M. V. Tretyakov, Stochastic numerics for mathematical physics, Berlin: Springer, (2004).
	
	\bibitem{Hong} J. L. Hong, L. H. Ji, X. Wang and J. J. Zhang, Positivity-preserving symplectic methods for the stochastic Lotka--Volterra predator-prey model, Bit, 62 (2022), 493--520.

	\bibitem{J1} J. D. Murray, Mathematical biology: I. An introduction, Springer Science \& Business Media, (2007).
	
	\bibitem{L1} L. Edelstein--Keshet, Mathematical models in biology, Society for Industrial and Applied Mathematics, (2005).
	
	\bibitem{Q1} Q. Liu, D. Q. Jiang, T. Hayat and B. Ahmad, Stationary distribution and extinction of a stochastic predator-prey model with additional food and nonlinear perturbation, Appl. Math. Comput., 320 (2018), 226--239. 
    
	\bibitem{V1} V. Volterra, Fluctuations in the Abundance of a Species considered Mathematically, Nat., 118 (1926), 558--560.
	
	\bibitem{X2} X. R. Mao, F. Y. W and W. Teerapot, Positivity preserving truncated Euler--Maruyama method for stochastic Lotka--Volterra competition model, J. Comput. Appl. Math., 394 (2021), 113566.
	
	\bibitem{Y1} Y. M. Cai, Q. Guo and  X. R. Mao, Positivity preserving truncated scheme for the stochastic Lotka--Volterra model with small moment convergence, Calcolo, 60 (2023), 24.
	
	\bibitem{Y2} Y. Li and W. R. Cao, A positivity preserving Lamperti transformed Euler--Maruyama method for solving the stochastic Lotka--Volterra competition model, Commun. Nonlinear Sci., 122 (2023), 107260.
\end{thebibliography}
\end{document}